\newcolumntype{C}[1]{>{\arraybackslash$}p{#1}<{$}}
\DeclareMathOperator{\xc}{xc}
\newcommand{\R}{\mathbb R}
\newcommand{\N}{\mathbb N}
\newcommand{\conv}{\mathrm{conv}}
\newcommand{\calR}{\mathcal{R}}
\newcommand{\calS}{\mathcal{S}}
\newcommand{\stab}{\mathrm{STAB}}
\newcommand{\qstab}{\mathrm{QSTAB}}
\newcommand{\bmat}[1]{\begin{bmatrix}#1\end{bmatrix}}
\newcommand{\logn}{\lceil\log_2 n\rceil}
\newtheorem{prop}{Proposition}
\newtheorem{lem}[prop]{Lemma}
\newtheorem{obs}[prop]{Observation}
\newtheorem{thm}[prop]{Theorem}
\theoremstyle{definition}
\begin{document}

\pagenumbering{arabic}

\title{Extended formulations from communication\\ protocols in output-efficient time}
%
%

\title{Extended formulations from communication\\ protocols in output-efficient time}
\author{Manuel Aprile, Yuri Faenza}
\date{}
\maketitle          
\begin{abstract}
  Deterministic protocols are well-known tools to obtain extended formulations, with many applications to polytopes arising in combinatorial optimization. Although constructive, those tools are not output-efficient, since the time needed to produce the extended formulation also depends on the number of rows of the slack matrix (hence, on the exact description in the original space). We give general sufficient conditions under which those tools can be implemented as to be output-efficient, showing applications to e.g.~Yannakakis' extended formulation for the stable set polytope of perfect graphs, for which, to the best of our knowledge, an efficient construction was previously not known. For specific classes of polytopes, we give also a direct, efficient construction of extended formulations arising from protocols. Finally, we deal with extended formulations coming from unambiguous non-deterministic protocols.
\smallskip

\noindent {\bf Keywords:} Communication protocols, Extended Formulations, Perfect Graphs.
\end{abstract}
\section{Introduction}\label{sec:intro}

Linear extended formulations are a fundamental tool in integer programming and combinatorial optimization, since they allow to reduce an optimization problem over a polyhedron $P$ to an analogous one over a polyhedron $Q$ that linearly projects to $P$. When $Q$ can be described with much fewer inequalities than $P$ (typically, polynomial vs. exponential in the dimension of $P$), this leads to a computational speedup. $Q$ as above is called an \emph{extension} of $P$, any set of linear inequalities describing $Q$ is an \emph{extended formulation}, and the minimum number of inequalities in an extended formulation for $P$ is called the \emph{extension complexity} of $P$, and denoted by $\xc(P)$. Computing or bounding the extension complexity of polytopes has been an important topic in recent years, see e.g.~\cite{chan2016approximate,fiorini2012linear,rothvoss2017matching}. 

Lower bounds on extension complexity are usually unconditional: neither they rely on any complexity theory assumptions, nor they take into account the time needed to produce the extension or the encoding length of coefficients in the inequalities. Upper bounds are often constructive and produce an extended formulation in time polynomial (often linear) in its size. Examples of the latter include Balas' union of polytopes, reflection relations, and branched polyhedral branching systems (see e.g.~\cite{conforti2010extended,kaibel2011extended}) .

The fact that we can construct extended formulations \emph{efficiently} is crucial, since their final goal is to make certain optimization problems (more) tractable. It is interesting to observe that there is indeed a gap between the \emph{existence} of certain extended formulations, and the fact that we can construct them efficiently: for instance in~\cite{bazzi2018no}, it is shown that there is a small extended formulation for the stable set polytope that is $O(\sqrt{n})$-approximated (with $n$ being the number of nodes of the graph), but we do not expect to obtain it efficiently because of known hardness results~\cite{haastad2001some}. In another case, a proof of the existence of a subexponential formulation with integrality gap $2+\epsilon$ for min-knapsack~\cite{bazzi2017small} predated the efficient construction of a formulation with these properties~\cite{fiorini1711strengthening}. 

\smallskip

In this paper, we investigate the efficiency of an important tool for producing extended formulation: communication protocols. In a striking result, Yannakakis~\cite{yannakakis1991expressing} showed that a deterministic communication protocol computing the slack matrix of a polytope $P=\{ x: Ax \leq b \} \subseteq \mathbb{R}^n$ can be used to produce an extended formulation for $P$. The number of inequalities of the latter is at most $2^c$, where $c$ is the \emph{complexity} of the protocol (see Section \ref{sec:prelim} for definitions). Hence, deterministic protocols can be used to provide upper bounds on extension complexity of polytopes. This reduction is constructive, but not efficient. Indeed, it produces an extended formulation with $n+2^c$ variables, $2^c$ inequalities, and an equation per row of $A$. Basic linear algebra implies that most equations are redundant, but in order to eliminate those we may have to go through the full (possibly exponential-size) list.
The main application of Yannakakis' technique is arguably given in his original paper, and it provides a relaxation of the stable set polytope. In particular, it gives an exact formulation for the stable set polytope when $G$ is a perfect graph. This is a class of polytopes that has received much attention in the literature, see e.g.~\cite{Chvatal75,grotschel1984polynomial}. They also play an important role in extension complexity: while many open problems in the area were settled one after the other~\cite{chan2016approximate,fiorini2012linear,kaibel2013constructing,rothvoss2017matching}, we still do not know if the stable set polytope of perfect graphs has polynomial-size extension complexity. Yannakakis' protocol gives an upper bound of $n^{O(\log n)}$, while the best lower bound is as small as $\Omega(n \log n )$~\cite{aprile2017extension}. On the other hand, a maximum stable set in a perfect graph can be computed efficiently via a polysize \emph{semidefinite} extension known as Lovasz' Theta body~\cite{lovasz1979shannon}. This can also be used, together with the ellipsoid method, to efficiently find a coloring of a perfect graph, see e.g.~\cite[Section 67.1]{schrijver2002combinatorial}. We remark that designing a combinatorial (or at least SDP-free) polynomial-time algorithm to find a maximum stable set in perfect graphs, or to color them, is a main open problem~\cite{chudnovsky2015coloring}. 

\smallskip

\noindent {\bf Our results.} In this paper, we investigate conditions under which we can explicitly obtain an extended formulation from a communication protocol in time polynomial in the size of the formulation itself. We first show a general algorithm that achieves this for any deterministic protocol, given a compact representation of the protocol as a labelled tree and of certain extended formulations associated to leaves of the protocol. The algorithm runs in time linear in the input size and is flexible, in that it also handles non-exact extended formulations. We then show that in some cases one can obtain those extended formulations directly, without relying on this general algorithm. This may be more interesting computationally. We show applications of our techniques in the context of (not only) perfect graphs. Our most interesting application is to Yannakakis' original protocol, and it leads to an extended formulation of size $n^{O(\log n)}$ that can be constructed in time $n^{O(n \log n)}$. For perfect graphs, this gives a subexponential SDP-free algorithm that computes a maximum stable set (resp. an optimal coloring). For general graphs, this gives a new relaxation of the stable set polytope which is (strictly) contained in the clique relaxation.
 Finally, we extend our result to obtain extended formulations from unambiguous non-deterministic protocols.

\smallskip

\section{Preliminaries} \label{sec:prelim}

{\bf Communication protocols.} We start by describing the general setting of communication protocols, referring to~\cite{kushilevitz1996communication} for more details. 
Let $M$ be a non-negative matrix with row (resp. column) set $X$ (resp. $Y$), and two agents Alice and Bob. Alice is given as input a row index $i\in X$, Bob a column index $j\in Y$, and they aim at determining $M_{ij}$ by exchanging information according to some pre-specified mechanism, that goes under the name of \emph{deterministic protocol}. At each step of the protocol, the actions of Alice (resp. Bob) only depend on her (resp. his) input and on what they exchanged so far. The protocol is said to \emph{compute $M$} if, for any input $i$ of Alice and $j$ of Bob, it returns $M_{ij}$. Such a protocol can be modelled as a rooted tree, with each vertex modelling a step where exactly one of Alice or Bob sends a bit (hence labelled with $A$ or $B$), and its children representing subsequent steps of the protocol. The \emph{leaves} of the tree indicate the termination of the protocol and are labelled with the corresponding output. The tree is therefore binary, with each edge representing a 0 or a 1 sent. Hence, a deterministic protocol can be identified by the following parameters: a rooted binary tree $\tau$ with node set ${\cal V}$;  a function $\ell:{\cal V}\rightarrow \{A,B\}$ (``Alice'',``Bob''), associating each vertex to its type; for each leaf $v \in {\cal V}$,  a non-negative number $\Lambda_v$ corresponding to the output at $v$; for each $v\in {\cal V}$, the set $S_v$ of pairs $(i,j)\subseteq X \times Y$ such that, on input $(i,j)$, the step corresponding to node $v$ is executed during the protocol. We represent this compactly by $(\tau,\ell,\Lambda, \{S_v\}_{v \in {\cal V}})$. It can be shown that each $S_v$ is a \emph{rectangle}, i.e. a submatrix of $M$. For a leaf $v$ of $\tau$, all entries of $S_v$ have the same value $\Lambda_v$, i.e. they form a submatrix of $M$ with constant values. Such submatrices are called \emph{monochromatic rectangles}, their collection is denoted by $\calR$, and we say that $\Lambda_R:=\Lambda_v$ is the \emph{value} of the rectangle $R$ associated to the leaf $v$.
We assume that $S_v \neq \emptyset$ for each node $v$ of the protocol.

An \emph{execution} of the protocol is a path of $\tau$ from the root to a leaf, whose edges correspond to the bits sent during the execution. 
The \emph{complexity} of the protocol is given by the height $h$ of the tree $\tau$. A deterministic protocol computing $M$ gives a partition of $M$ in at most $2^h$ monochromatic rectangles.  We remark that one can obtain a protocol (and a partition in rectangles) for $M^T$ given a protocol for $M$ by just exchanging the roles of Alice and Bob. 

The setting of \emph{non-deterministic} protocols is similar as before, but now Alice and Bob are allowed to make guesses in their communication, with the requirement that, at the end of the protocol, they can both independently verify that the outcome corresponds to $M_{ij}$ for at least one guess made during the protocol. 
A non-deterministic protocol is called \emph{unambiguous} if for any input $i,j$, exactly one guess allows to verify the value of $M_{ij}$. The \emph{complexity} of a non-deterministic protocol is the maximum (over all inputs and guesses) amount of bits exchanged during the protocol. 
Non-deterministic protocols of complexity $c$ provide a cover of $M$ with at most $2^c$ monochromatic rectangles, which is a partition in the case the protocol is unambiguous. Moreover, each partition of $M$ in $N$ monochromatic rectangles corresponds to an unambiguous protocol of complexity $\lceil\log_2 N\rceil$, where Alice guesses the rectangle covering $i,j$. 

We want to mention another class of communication protocols that is relevant to extended formulations, namely \emph{randomized} protocols that compute a (non-negative) matrix in expectations. These generalize both deterministic and unambiguous non-deterministic protocols and have been defined in \cite{faenza2015extended}, where they are shown to be equivalent to non-negative factorizations (see the next section for the definition of the latter) and to essentially capture the notion of extension complexity. In fact, every extended formulation is obtained from a simple randomized protocol, see again~\cite{faenza2015extended}. 

\medskip

\noindent {\bf Extended formulations and how to find them.} 
We follow here the framework introduced in~\cite{pashkovich2012extended}, that extends~\cite{yannakakis1991expressing}. Consider a pair of polytopes $(P,Q)$ with $P=\conv(v_1,\dots, v_n)$ $\subseteq Q=\{x\in \mathbb{R}^d: Ax\leq b\}\subseteq \R^d$, where $A$ has $m$ rows $a_1,\dots,a_m$.
A polytope $R\in \R^{d'}$ is an \emph{extension} for the pair $(P,Q)$ if there is a projection $\pi: \R^{d'}\rightarrow \R^d$ such that $P\subseteq \pi(R)\subseteq Q$. An \emph{extended formulation} for $(P,Q)$ is a set of linear inequalities describing $R$ as above, and the minimum number of inequalities in an extended formulation for $(P,Q)$ is its \emph{extension complexity}. The \emph{slack matrix} $M(P,Q)$ of the pair $(P,Q)$ is the non-negative $m\times n$ matrix with $M(P,Q)_{i,j}=b_i-a_i^\top v_j$, where $a_i$ is the $i$-th row of $A$. A \emph{non-negative factorization} of $M$ is a pair of non-negative matrices $(T,U)$ such that $M=TU$. The \emph{non-negative rank} of $M$ is the smallest intermediate dimension in a non-negative factorization of $M$. 

\begin{thm}\cite{pashkovich2012extended}  [Yannakakis' Theorem for pairs of polytopes]\label{thm:yan}
	Given a slack matrix $M$ of a pair of polytopes $(P,Q)$ of dimension at least $1$, the extension complexity of $(P,Q)$ is equal to the non-negative rank of $M$. In particular, if $M=TU$ is a non-negative factorization of $M$, then $P\subseteq \{x: \;\exists\; y\geq 0: Ax+Ty=b\}\subseteq Q.$
\end{thm}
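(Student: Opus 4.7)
The plan splits the theorem into two pieces: the explicit constructive sandwich (the ``in particular'' clause) and the equality $\xc(P,Q)=\rk_+(M)$, where $\rk_+$ denotes the non-negative rank. The first piece is elementary and is my first target; the equality then follows by combining it with a matching lower bound.

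For the sandwich, let $R := \{(x, y) \in \R^d \times \R^r : Ax + Ty = b,\ y \geq 0\}$ and $\pi$ the projection onto $x$, so the claim reads $P \subseteq \pi(R) \subseteq Q$. The outer inclusion is immediate, since $(x,y)\in R$ gives $Ax = b - Ty$, and $T, y \geq 0$ force $Ax \leq b$, i.e., $x \in Q$. For the inner inclusion, convexity of $\pi(R)$ reduces the task to lifting each vertex $v_j$ of $P$; the natural choice $y_j := U_{\cdot,j}$ is non-negative by hypothesis on $U$, and $T y_j = (TU)_{\cdot, j} = M_{\cdot, j}$, whose $i$-th entry equals $b_i - a_i^\top v_j$ by definition of the slack matrix, so $A v_j + T y_j = b$ and $(v_j, y_j) \in R$.

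Applying this to a factorization $M = TU$ of minimum intermediate dimension $r = \rk_+(M)$ immediately gives $\xc(P,Q) \leq r$, since the resulting extension has exactly $r$ facets (the non-negativity constraints on $y$; the equations $Ax+Ty=b$ do not contribute to the facet count). For the reverse inequality, I would take an optimal extension $R' = \{z : Cz \leq d,\ Ez = f\}$ with $C$ having $k = \xc(P,Q)$ rows and projection $\pi$, and manufacture a non-negative factorization of size $k$. For each row $i$, LP duality applied to the valid inequality $a_i^\top \pi(z) \leq b_i$ on $R'$ yields $t_i \geq 0$ and free $\lambda_i$ with $t_i^\top C + \lambda_i^\top E = a_i^\top \pi$ and $t_i^\top d + \lambda_i^\top f = b_i$. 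Picking a lift $z_j \in R'$ of each $v_j$ and setting $U_{\cdot,j} := d - Cz_j \geq 0$, a direct calculation gives $t_i^\top U_{\cdot,j} = (t_i^\top d + \lambda_i^\top f) - a_i^\top v_j = M_{ij}$, so $M = TU$ with both factors non-negative and of intermediate dimension $k$.

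The main obstacle is enforcing the \emph{equality} $t_i^\top d + \lambda_i^\top f = b_i$ rather than the routine duality inequality $\leq b_i$. Without it, one only recovers $M = TU + c\mathbf{1}^\top$ for some $c \geq 0$, yielding size $k+1$ instead of $k$. This is where the hypothesis that $(P,Q)$ has dimension at least one enters: it lets us redescribe $R'$, for instance by passing to its affine hull or by tightening the description of $Q$ to match $\pi(R')$, so that every facet inequality of $R'$ is attained on a lift of some vertex of $P$, forcing the constant term to vanish. The careful handling of this degeneracy is the technical heart of the lower bound, and I would defer to \cite{pashkovich2012extended} for it.
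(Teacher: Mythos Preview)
The paper does not prove Theorem~\ref{thm:yan}: it is stated as a known result and attributed to~\cite{pashkovich2012extended}, so there is no in-paper proof to compare against. Your proposal is therefore not competing with anything the authors wrote; it is supplying the argument they chose to cite.

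On substance, your sketch is the standard one and is essentially correct. The constructive sandwich $P\subseteq\{x:\exists\,y\ge 0,\ Ax+Ty=b\}\subseteq Q$ is fully and cleanly proved: the outer inclusion from $T,y\ge 0\Rightarrow Ax\le b$, and the inner inclusion by lifting each $v_j$ via $y_j:=U_{\cdot,j}$ and using $Ty_j=M_{\cdot,j}=b-Av_j$. This immediately gives $\xc(P,Q)\le \rk_+(M)$, since the extension produced has exactly $r$ inequalities. For the reverse inequality, your LP-duality construction of $T$ and $U$ from an optimal extension is the right mechanism, and your identification of the only delicate point---that duality a priori yields $t_i^\top d+\lambda_i^\top f\le b_i$, so one obtains $M=TU+c\mathbf{1}^\top$ with $c\ge 0$ and hence size $k+1$ rather than $k$---is exactly the place where the hypothesis ``dimension at least~$1$'' is consumed. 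Deferring that normalization to~\cite{pashkovich2012extended} is appropriate, as the paper itself does.
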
 

Hence, a factorization of the slack matrix of intermediate dimension $N$ gives an extended formulation of size $N$ (i.e.,~with $N$ inequalities). However such formulation has as many equations as the number of rows of $A$.

Now assume we have a deterministic protocol of complexity $c$ for computing $M=M(P,Q)$. 
The protocol gives a partition of $M$ into at most $2^c$ monochromatic rectangles. This implies that $M=R_1+\dots+R_N$, where $N\leq 2^c$ and each $R_i$ is a rank 1 matrix corresponding to a monochromatic rectangle of non-zero value. Hence $M$ can be written as a product of two non-negative matrices $T, U$ of intermediate dimension $N$, where $T_{i,j}=1$ if the (monochromatic) rectangle $R_j$ contains row index $i$ and 0 otherwise, and $U_{i,j}$ is equal to the value of $R_i$ if $R_i$ contains column index $j$, and 0 otherwise. As a consequence of Theorem \ref{thm:yan}, this yields an extended formulation for $(P,Q)$. In particular, let $\calR^1$ be the set of monochromatic, non-zero rectangles of $M$ produced by the protocol and, for $i=1,\dots,m$, let ${\calR}^1_i\subset {\calR}^1$ be the set of rectangles whose row index set includes $i$. Then the following is an extended formulation for $(P,Q)$:

\vspace{-.1cm}

\begin{align}\label{ef}
a_i x+\sum_{R\in {\calR}^1_i} y_R=b_i & \;\;\;\; \forall \;i=1,\dots, m\\
y\geq 0&\nonumber
\end{align} Again, the formulation has as many equations as the number of rows of $A$, and it is not clear how to get rid of non-redundant equations efficiently. Note that all definitions and facts from this section specialize to those from~\cite{yannakakis1991expressing} for a single polytope when $P=Q$.

\medskip

\noindent {\bf Stable set polytope and $\qstab(G)$.}
The stable set polytope $\stab(G)$ is the convex hull of the characteristic vectors of stable (also, independent) sets of a graph $G$. It has exponential extension complexity \cite{fiorini2012linear,goos2018extension}.  The \emph{clique relaxation} of $\stab(G)$ is:
\begin{equation}\label{eq:perfectgraph}
\qstab(G)=\left\{ x \in \mathbb{R}^d_+ : \sum_{v\in C} x_v \leq 1 \hbox{ for all cliques } C \hbox{ of }G\right\}.
\end{equation}

Note that in~\eqref{eq:perfectgraph} one could restrict to maximal cliques, even though in the following we will consider all cliques when convenient. As a consequence of the equivalence between separation and optimization, optimizing over $\qstab(G)$ is NP-hard  for general graphs, see e.g.~\cite{schrijver2002combinatorial}.
However, the clique relaxation is exact for perfect graphs, for which the optimization problem is polynomial-time solvable using semidefinite programming (see Section~\ref{sec:intro}):
\begin{thm}[\cite{Chvatal75}]\label{thm:stabperfect}
A graph $G$ is perfect if and only if $\stab(G)=\qstab(G)$.
\end{thm}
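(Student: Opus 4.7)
The containment $\stab(G) \subseteq \qstab(G)$ is immediate, since for any stable set $S$ and clique $C$, $|S\cap C|\le 1$, so the characteristic vector $\chi^S$ satisfies every clique inequality.

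For the direction ``$G$ perfect $\Rightarrow \stab(G) = \qstab(G)$'' I would proceed by LP duality. It suffices to show that for every integer weight vector $w \in \mathbb{Z}_{\geq 0}^{V(G)}$ the LP $\max\{w^\top x : x \in \qstab(G)\}$ attains the maximum $w$-weight of a stable set of $G$. The key tool is Lov\'asz's replacement lemma: substituting each vertex $v$ of a perfect graph $G$ by a stable set of $w_v$ copies (preserving external adjacencies) produces a perfect graph $G^w$. Stable sets of $G^w$ are in bijection with $w$-weighted stable sets of $G$, hence $\alpha(G^w)$ equals the maximum $w$-weight of a stable set of $G$; cliques of $G^w$ correspond to cliques of $G$, and a minimum clique cover of $V(G^w)$ corresponds to a minimum integer weighted clique cover of $(G,w)$. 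Perfection of $G^w$ gives $\alpha(G^w) = \chi(\overline{G^w})$, so these two integer quantities coincide. The LP $\max\{w^\top x : x \in \qstab(G)\}$ has LP-dual equal to the weighted fractional clique cover of $(G,w)$, and the sandwich (max weight stable set)$\le$(LP primal)$=$(LP dual)$\le$(integer weighted clique cover) collapses to equalities. Hence $\max_{\stab(G)}w^\top x = \max_{\qstab(G)}w^\top x$ for every integer $w\ge 0$; a standard density/positive-homogeneity argument extends this to every $w \in \mathbb{R}^{V(G)}$ (using that both polytopes are bounded and lie in $\mathbb{R}^{V(G)}_{\geq 0}$), yielding $\stab(G)=\qstab(G)$.

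For the converse, suppose $\stab(G) = \qstab(G)$ and, towards a contradiction, that $G$ is not perfect. Choose an induced subgraph $H$ of $G$ that is minimally imperfect, meaning $H$ is not perfect but every proper induced subgraph of $H$ is. Setting $x_v = 0$ for $v \notin V(H)$ defines a common face of both $\stab(G)$ and $\qstab(G)$, so the polytope equality descends to $\stab(H) = \qstab(H)$. Invoking Lov\'asz's classical bound on minimally imperfect graphs, we have $|V(H)| > \alpha(H)\,\omega(H)$. Now let $x^\ast := \tfrac{1}{\omega(H)}\mathbf{1} \in \mathbb{R}^{V(H)}$: every clique $C$ of $H$ satisfies $\sum_{v\in C} x^\ast_v = |C|/\omega(H) \leq 1$, so $x^\ast \in \qstab(H)$, but $\mathbf{1}^\top x^\ast = |V(H)|/\omega(H) > \alpha(H) = \max\{\mathbf{1}^\top x : x \in \stab(H)\}$, whence $x^\ast\notin \stab(H)$, a contradiction.

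The main obstacle is that the argument rests on two nontrivial structural results about perfect graphs: Lov\'asz's replacement lemma in the forward direction and the inequality $|V(H)| > \alpha(H)\omega(H)$ for minimally imperfect $H$ in the converse. Once these are granted, the polyhedral content is routine LP duality together with the observation that scaling the all-ones vector by $1/\omega(H)$ produces a natural fractional point separating $\qstab$ from $\stab$ as soon as the $\alpha\omega$-bound is violated.
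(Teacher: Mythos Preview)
The paper does not prove this theorem at all: it is stated with a citation to Chv\'atal~\cite{Chvatal75} and used as background, so there is no ``paper's own proof'' to compare against.

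Your argument is essentially the standard textbook route and is correct in outline. Two small points are worth flagging. First, the extension from nonnegative integer $w$ to arbitrary $w$ is cleaner if you observe that both $\stab(G)$ and $\qstab(G)$ are down-monotone polytopes in $[0,1]^{V(G)}$; for such polytopes, equality of support functions on the cone $w\ge 0$ already forces equality of the polytopes, so the ``density/positive-homogeneity'' remark can be replaced by this one-line observation. Second, in the converse you invoke $|V(H)|>\alpha(H)\,\omega(H)$ for a minimally imperfect $H$; this is Lov\'asz's characterisation of perfection (equivalent to the weak perfect graph theorem), and while it does predate Chv\'atal's polyhedral result and is thus not circular, it is worth being explicit that this is the theorem ``$G$ is perfect iff $|V(H')|\le\alpha(H')\omega(H')$ for every induced $H'$'', since the inequality is not obvious from the definition of minimal imperfection alone. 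With those two clarifications your proof is complete.
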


A fundamental result from \cite{yannakakis1991expressing} is the following.

\begin{thm}\label{thm:protocol-pg}
Let $G$ be a graph with $n$ vertices. There is a deterministic protocol of complexity $O(\log^2n)$ computing the slack matrix of the pair $(STAB(G), QSTAB(G))$. Hence, there is an extended formulation of size $n^{O(\log n)}$ for $(STAB(G), QSTAB(G))$. 
\end{thm}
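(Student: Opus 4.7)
The slack matrix $M$ of the pair $(\stab(G),\qstab(G))$ has rows indexed by the cliques $K$ of $G$ (one per clique inequality of $\qstab(G)$) and columns indexed by the stable sets $S$ of $G$, with $M_{K,S}=1-|K\cap S|\in\{0,1\}$ since $|K\cap S|\le 1$. The protocol, on input $(K,S)$, must therefore decide whether $K\cap S=\emptyset$. My plan is to design it recursively, parametrising by an ``active'' induced subgraph $G[V']$ that both players maintain from the communication so far, with invariant $K\cap S\subseteq V'$; initially $V'=V(G)$.

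At each recursive call both players deterministically compute a pivot $v\in V'$ from $G[V']$ alone, so no bits are spent on agreement. Alice sends $[v\in K]$ and Bob sends $[v\in S]$. If both bits are $1$, the protocol outputs $0$. If they are $(1,0)$, the fact that $K$ is a clique through $v$ gives $K\setminus\{v\}\subseteq N(v)$, whence $K\cap S\subseteq N(v)\cap V'$, and the protocol recurses on $G[V'\cap N(v)]$; symmetrically $(0,1)$ recurses on $G[V'\setminus N[v]]$ because $S$ is stable. The outcome $(0,0)$ only removes $v$ from $V'$, and handling this branch efficiently is the main obstacle: a na\"ive implementation has recursion depth $\Omega(n)$.

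The key tool for dealing with the $(0,0)$ branch is the factorisation identity
\[
1-|K\cap S|\;=\;\bigl(1-|K\cap S\cap N(v)|\bigr)\bigl(1-|K\cap S\cap (V\setminus N[v])|\bigr),
\]
valid because $|K\cap S|\le 1$ forces at most one of the two factors to vanish. This exhibits the $(0,0)$ block of $M$ as the entrywise product of two strictly smaller slack matrices of the same type (attached to $G[N(v)]$ and to $G[V\setminus N[v]]$ respectively), so that any monochromatic-rectangle covers of the two factors combine multiplicatively into a cover of the $(0,0)$ block. Choosing $v$ so as to balance $|N(v)\cap V'|$ and $|(V\setminus N[v])\cap V'|$ (and, when $G$ itself lacks a balanced vertex, switching to the complement graph $\bar G$, which transposes $M$ and so preserves cover sizes since cliques and stable sets swap under complementation) yields a recurrence of the form $f(n)\le O(n)\cdot f(\lceil n/2\rceil)$ on the cover size, solving to $f(n)=n^{O(\log n)}$; equivalently one obtains a deterministic protocol of complexity $O(\log^2 n)$.

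The second assertion of the theorem then follows by applying Theorem \ref{thm:yan}: the protocol produces a partition of $M$ into at most $2^{O(\log^2 n)}=n^{O(\log n)}$ monochromatic rectangles, hence a nonnegative factorisation of $M$ with that intermediate dimension, and consequently an extended formulation for $(\stab(G),\qstab(G))$ of size $n^{O(\log n)}$.
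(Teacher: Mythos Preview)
Your handling of the $(0,0)$ branch does not yield the recurrence you claim. The factorisation identity is correct, and it does express the $(0,0)$ block as an entrywise product of two smaller instances on $a:=|N(v)\cap V'|$ and $b:=|V'\setminus N[v]|$ vertices. But combining monochromatic covers of the two factors \emph{multiplicatively} gives $f(a)\cdot f(b)$ rectangles for that block, so the recurrence your construction actually produces is
\[
f(n)\;\le\;1+f(a)+f(b)+f(a)\,f(b),\qquad a+b=n-1,
\]
and on the protocol side $C(n)\le 2+C(a)+C(b)$ (running both subprotocols in sequence and multiplying their outputs). Even with a perfectly balanced pivot ($a\approx b\approx n/2$), these solve to $f(n)=2^{\Theta(n)}$ and $C(n)=\Theta(n)$, not to $n^{O(\log n)}$ and $O(\log^2 n)$. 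The recurrence $f(n)\le O(n)\cdot f(\lceil n/2\rceil)$ you invoke is simply not what your argument gives; no choice of pivot, and no passage to $\bar G$, turns the product $f(n/2)^2$ into $O(n)\,f(n/2)$.

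The missing idea---which is exactly Yannakakis' protocol as recalled in the paper---is to prevent the $(0,0)$ branch from ever occurring. Instead of both players spending one bit on a globally fixed pivot, the player whose turn it is spends $O(\log n)$ bits to \emph{name} a vertex that actually belongs to her own input: Alice sends the first low-degree vertex of her clique (or signals that her clique contains none), and dually for Bob with high-degree vertices of his stable set. This forces the corresponding membership bit to be $1$ by construction, so only the branches that genuinely halve $|V'|$ (or terminate) arise. Now each round costs $O(\log n)$ bits and there are $O(\log n)$ rounds, giving $C(n)=O(\log^2 n)$; equivalently, the number of leaves satisfies $f(n)\le (n{+}1)\,f(\lceil n/2\rceil)$, which is precisely the $O(n)\cdot f(n/2)$ recurrence you wanted but did not obtain.
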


We remark that, when $G$ is perfect, Theorem \ref{thm:protocol-pg} gives a quasipolynomial size extended formulation for $STAB(G)$. However, as discussed above, the Theorem does not give a subexponential algorithm to obtain such formulation.

\section{A general approach}

 We present here a general technique to explicitly and efficiently produce extended formulations from deterministic protocols, starting with an informal discussion. 

It is important to address the issue of what is our input, and what assumptions we need in order to get an ``efficient" algorithm. 
Recall that a deterministic protocol is identified with a tuple $(\tau,\ell,\Lambda, \{S_v\}_{v \in {\cal V}})$, and the monochromatic rectangles corresponding to leaves of the protocol are denoted by ${\cal R}$. We will assume that $\tau,\ell,\Lambda$ are given to us explicitly, as their size is linearly correlated to the size of the extended formulation we want to produce. However, since in our setting the coefficient matrix $A$ describing $P$ is thought as being exponential in size, the sets $S_v$ have also in general exponential size: hence these are not part of our input.
It is reasonable to assume that we have an implicit description of $A$ and of the $S_v$'s for all leaves of the protocol. In particular, we will require as input certain linear formulations related to $A$ and to the rectangles in $\calR$.

The natural approach to reduce the size of \eqref{ef} is to eliminate redundant equations. However, the structure of the coefficient matrix depends both on $A$ and on rectangles $\calR_i$'s of the factorization, which can have a complex behaviour. The reader is encouraged to try e.g.~on the extended formulations obtained via Yannakakis' protocol for $\stab(G)$, $G$ perfect: the sets $\calR_i$'s have very non-trivial relations with each other that depend heavily on the graph, and we did not manage to directly reduce the system \eqref{ef} for general perfect graphs. Theorem \ref{thm:algdetgeneral} shows how to bypass this problem. 
Informally, we shift the problem of eliminating redundant equations from the system \eqref{ef} to a family of systems $\{A_R x+\mathbf{1}\Lambda_R=b_R\}$,
one for each monochromatic rectangle $R$ of value $\Lambda_R$ produced by the protocol (see Lemma \ref{lem:ef-leaves}). 
 Those systems can still have exponential size, but they may be much easier to deal with since they do not have extra variables. We remark that we need to deal with a system as above for \emph{every} monochromatic rectangle, including those corresponding to output of value $0$. 

\smallskip

We now switch gears and make the discussion formal. Let us start by recalling a well-known theorem from Balas \cite{balas1979disjunctive}, in a version given by Weltge (\cite{weltge2015sizes}, Section 3.1.1).
\begin{thm}\label{thm:balas}
	Let $P_1,P_2\subset \R^d$ be polytopes, with $P_i=\pi_i\{y\in \R^{m_i}: A^iy\leq b^i\}$, where $\pi_i: \R^{m_i}\rightarrow d$ is a linear map, for $i=1,2$. Let $P=\conv(P_1\cup P_2)$. Then we have:
	\begin{align*}\label{eq:balas}
	P=\{x\in \R^d:& \;\exists\; y^1\in \R^{m_1},y^2\in \R^{m_2}, \lambda\in \R:  x=\pi_1(y^1)+\pi_2(y^2),
	\\ 
	&A^1y^1\leq \lambda b^1, A^2y^2\leq (1-\lambda) b^2, 0\leq \lambda\leq 1\}.
	\end{align*}
	Moreover, the inequality $\lambda\geq 0$ ($\lambda\leq 1$ respectively) is redundant if $P_1$ ($P_2$) has dimension at least 1. Hence $\xc(P)\leq \xc(P_1)+\xc(P_2)+|\{i: \dim(P_i)=0\}|$.
\end{thm}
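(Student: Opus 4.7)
My plan is to prove set equality by inclusion in both directions, and then argue the moreover statement on redundancy.

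\textbf{Forward inclusion $P \subseteq$ RHS.} Take $x \in P$. By definition of convex hull, $x = \lambda x^1 + (1-\lambda) x^2$ for some $x^i \in P_i$ and $\lambda \in [0,1]$. Since $P_i = \pi_i\{y: A^i y \leq b^i\}$, pick $z^i$ with $\pi_i(z^i) = x^i$ and $A^i z^i \leq b^i$. Set $y^1 := \lambda z^1$ and $y^2 := (1-\lambda) z^2$. By linearity of $\pi_i$ we have $\pi_1(y^1) + \pi_2(y^2) = \lambda x^1 + (1-\lambda) x^2 = x$, and $A^1 y^1 = \lambda A^1 z^1 \leq \lambda b^1$, and analogously for the other constraint. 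All the constraints on $\lambda$ are met.

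\textbf{Reverse inclusion RHS $\subseteq P$.} Take $(x, y^1, y^2, \lambda)$ feasible. If $\lambda \in (0,1)$, set $z^1 := y^1/\lambda$ and $z^2 := y^2/(1-\lambda)$; then $A^i z^i \leq b^i$, hence $\pi_i(z^i) \in P_i$, and $x = \lambda \pi_1(z^1) + (1-\lambda) \pi_2(z^2) \in \conv(P_1 \cup P_2) = P$. If $\lambda = 0$, then $A^1 y^1 \leq 0$, so $y^1$ lies in the recession cone of $\{y: A^1 y \leq b^1\}$; since $P_1 = \pi_1(\{y: A^1 y \leq b^1\})$ is bounded, this cone projects to $\{0\}$, so $\pi_1(y^1) = 0$ and $x = \pi_2(y^2) \in P_2 \subseteq P$. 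The case $\lambda = 1$ is symmetric.

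\textbf{Redundancy of $\lambda \geq 0$ when $\dim(P_1) \geq 1$.} I need to show every tuple $(x, y^1, y^2, \lambda)$ satisfying the remaining inequalities, including a potentially negative $\lambda$, admits an alternative representation in the original formulation (with $\lambda' \in [0,1]$). Since $\dim(P_1) \geq 1$, I can pick $\hat y, \hat y' \in \{y: A^1 y \leq b^1\}$ with $\pi_1(\hat y) \neq \pi_1(\hat y')$. The plan is to set $\lambda' := 0$ and construct $\tilde y^1 := y^1 + |\lambda| \hat y$, which satisfies $A^1 \tilde y^1 \leq \lambda b^1 + |\lambda| b^1 = 0 = \lambda' b^1$, then compensate in $y^2$ by an analogous shift using a suitable point in $\{y: A^2 y \leq b^2\}$, exploiting the fact that we have more slack because $1-\lambda > 1$. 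The symmetric statement for $\lambda \leq 1$ follows by swapping the roles of $P_1$ and $P_2$. The size bound $\xc(P) \leq \xc(P_1) + \xc(P_2) + |\{i: \dim(P_i)=0\}|$ then follows by counting: each extension contributes its inequalities, plus the two constraints $0 \leq \lambda \leq 1$, with one of them dropped whenever the corresponding $P_i$ has positive dimension.

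\textbf{Main obstacle.} The delicate point is the redundancy argument: carefully verifying that the shift can be implemented so that the new tuple simultaneously (a) keeps $x$ unchanged, (b) keeps $\lambda' \in [0,1]$, and (c) respects both $A^i$-constraints. The trick is to use the two distinct preimages $\hat y, \hat y'$ to move $y^1$ in a direction that projects nontrivially under $\pi_1$ while controlling the coefficient of $b^1$, and then to absorb the induced change in $\pi_1$ through a compensating modification of $y^2$, which is possible precisely because the original $\lambda < 0$ gave extra slack in the $A^2$ constraint.
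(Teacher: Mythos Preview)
The paper does not prove this theorem: it is quoted as a known result of Balas, in the form given in Weltge's thesis, and used as a black box. So there is no ``paper's own proof'' to compare against; I will comment on your argument directly.

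Your two inclusions for the main equality are fine and standard. The boundary cases $\lambda\in\{0,1\}$ are handled correctly via the recession cone of $\{y:A^1y\le b^1\}$ projecting to $\{0\}$ under $\pi_1$ (since $P_1$ is bounded).

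The genuine gap is in the redundancy part. You set out to show that any feasible tuple with $\lambda<0$ can be replaced by one with $\lambda'=0$, shifting $y^1$ by $|\lambda|\hat y$ and then ``compensating in $y^2$''. But you never verify that such a compensation is possible: you would need some $\tilde y^2$ with $A^2\tilde y^2\le b^2$ and $\pi_2(\tilde y^2)=\pi_2(y^2)-|\lambda|\,\pi_1(\hat y)$, and there is no reason this target point lies in $P_2$. The ``extra slack'' in the $A^2$ constraint lets you rescale $y^2$, but it does not let you translate $\pi_2(y^2)$ by an arbitrary vector coming from $P_1$. Invoking two preimages $\hat y,\hat y'$ with distinct projections does not fix this, because any shift $y^1+|\lambda|\hat y$ lands in the recession cone and hence projects to $0$ regardless of which $\hat y$ you pick.

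In fact that last observation is the key you are missing: redundancy holds in the strong sense that $\lambda<0$ is \emph{infeasible} in the extended space, so no alternative representation is needed. If $A^1y^1\le\lambda b^1$ with $\lambda<0$, then for every $z$ with $A^1z\le b^1$ one has $A^1(y^1-\lambda z)\le 0$, so $y^1-\lambda z$ is a recession direction. By the very argument you used in the $\lambda=0$ case, this projects to $0$, giving $\pi_1(z)=\pi_1(y^1)/\lambda$ for \emph{all} such $z$. Hence $P_1$ would be a single point, contradicting $\dim P_1\ge 1$. This directly yields the redundancy of $\lambda\ge 0$ and the stated bound on $\xc(P)$.
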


We now give the main theorem of this section. Note that, while the result relies on the existence of a deterministic protocol $(\tau,\ell,\Lambda,\{S_v\}_{v \in {\cal V}})$, its complexity does not depend on the encoding of $\Lambda$ and $\{S_v\}_{v \in {\cal V}}$ (see the discussion above).

\begin{thm}\label{thm:algdetgeneral}
	Let $S$ be a slack matrix for a pair $(P,Q)$, where $P=\conv\{x^*_1,\dots,x^*_n\}\subseteq \R^d$ and $Q=\{x\in \R^d: a_ix\leq b_i $ for $i=1,\dots,m, \; \ell_j \leq x_j \leq u_j \hbox{ for $j \in [d]$}\}$. Assume there exists a deterministic protocol $(\tau,\ell,\Lambda,\{S_v\}_{v \in {\cal V}})$ with  complexity $c$ computing $S$, and let ${\cal R}$ be the set of monochromatic rectangles in which it partitions $S$ (hence $c\leq \lceil \log_2 |\calR|\rceil$). 
	For $R\in \calR$, let $P_R=\conv\{x^*_j: j $ is a column of $R\}$ and $Q_R=\{x\in \R^d: a_ix\leq b_i \;\forall \;i$ row of $R; \ \ell_j\leq x_j\leq u_j \hbox{ for all $j \in [d]$} \}$. 
	
	Suppose we are given $\tau,\ell$ and  for each $R \in {\cal R}$ an extended formulation $T_R$ for $(P_R,Q_R)$. Let $\sigma(T_R)$ be the size (number of inequalities) of $T_R$, and $\sigma_+(T_R)$ be the total encoding length of the description of $T_R$ (including the number of inequalities, variables and equations).
	Then we can construct an extended formulation for $(P,Q)$ of size linear in $\sum_{R\in \calR}\sigma(T_R)$ in time linear in $\sum_{R\in\calR} \sigma_+(T_R)$.
\end{thm}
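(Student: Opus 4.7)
The plan is to traverse the protocol tree $\tau$ bottom-up, maintaining as invariant at each node $v$ an extended formulation $T_v$ for the pair $(P_v,Q_v)$ associated to the rectangle $S_v$, defined in the obvious way: $P_v=\conv\{x^*_j:j\in J_v\}$ and $Q_v=\{x\in\R^d:a_ix\le b_i\ \forall i\in I_v,\ \ell_j\le x_j\le u_j\ \forall j\in[d]\}$, with $I_v,J_v$ the row and column index sets of $S_v$. At a leaf I set $T_v:=T_R$, while at the root $v_0$ we have $(P_{v_0},Q_{v_0})=(P,Q)$, which yields the desired formulation.

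The recursion at an internal node $v$ with children $v_1,v_2$ exploits a dichotomy built into the protocol: since Alice's messages depend only on the row and Bob's only on the column, an $A$-node splits the row set while fixing columns, and a $B$-node does the opposite. Thus for $\ell(v)=A$ we have $P_{v_1}=P_{v_2}=P_v$ and $Q_v=Q_{v_1}\cap Q_{v_2}$ (the partitioned row constraints together with the shared variable bounds), while for $\ell(v)=B$ we have $Q_{v_1}=Q_{v_2}=Q_v$ and $P_v=\conv(P_{v_1}\cup P_{v_2})$. In the first case, if $R_1,R_2$ are the extensions described by $T_{v_1},T_{v_2}$ with projections $\pi_1,\pi_2$ satisfying $P_v\subseteq\pi_i(R_i)\subseteq Q_{v_i}$, I take $T_v$ to describe the glued polytope $\{(x,y^1,y^2):(x,y^1)\in R_1,(x,y^2)\in R_2\}$, whose projection to $x$ equals $\pi_1(R_1)\cap\pi_2(R_2)$ and is sandwiched between $P_v$ and $Q_{v_1}\cap Q_{v_2}=Q_v$. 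In the second case, I apply Theorem~\ref{thm:balas} to the two children's extensions, producing a formulation whose projection equals $\conv(\pi_1(R_1)\cup\pi_2(R_2))$ and is sandwiched between $\conv(P_{v_1}\cup P_{v_2})=P_v$ and $\conv(Q_v\cup Q_v)=Q_v$. Both combinators add the two inequality counts with only $O(1)$ overhead per node (at most two extra inequalities at a $B$-node, coming from $0\le\lambda\le1$ in Balas).

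Since $\tau$ has at most $2|\calR|-1$ nodes, summing the per-node overhead gives a final formulation of size $O(\sum_{R\in\calR}\sigma(T_R))$, and the algorithm visits each node once while merely concatenating children's systems into fresh variable blocks, for total running time $O(\sum_{R\in\calR}\sigma_+(T_R))$. The only point requiring care is verifying that the sandwich $P_v\subseteq\pi(R_v)\subseteq Q_v$ propagates up the tree and that the identity $Q_v=Q_{v_1}\cap Q_{v_2}$ holds at $A$-nodes; the latter is exactly why we build every $Q_v$ to carry all bounds $\ell_j\le x_j\le u_j$, so that the shared variable bounds are consistent across the recursion and never cause double-counting issues.
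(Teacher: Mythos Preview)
Your argument is correct and follows essentially the same route as the paper's proof: a bottom-up traversal of the protocol tree where $A$-nodes are handled by intersecting (juxtaposing) the children's formulations and $B$-nodes by Balas' union, with the additive size bound following from the linear number of internal nodes. One small remark: the real reason the box constraints $\ell_j\le x_j\le u_j$ are carried in every $Q_v$ is to guarantee that each $\pi_i(T_{v_i})$ is a (bounded) polytope so that Theorem~\ref{thm:balas} applies, rather than to make $Q_v=Q_{v_1}\cap Q_{v_2}$ hold (that identity would hold regardless, since the row index sets of $S_{v_1},S_{v_2}$ partition that of $S_v$).
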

\begin{proof}

	We can assume without loss of generality that $\tau$ is a complete binary tree, i.e. each node of the protocol other than the leaves has exactly two children. Let ${\cal V}$ be the set of nodes of $\tau$ and $v \in {\cal V}$. Recall that $S_v$ is the (non-necessarily monochromatic) rectangle given by all pairs $(i,j)$ such that, on input $(i,j)$, the execution of the protocol visits node $v$.
	Let us define, for any such $S_v$, a pair $(P_v, Q_v)$ with $P_v=\conv\{x^*_j: j  \hbox{ is a column of } S_v\}$ and $$Q_v=\{x\in \R^d: a_ix\leq b_i \;\forall \;i \hbox{ row of }S_v; \  \ell_j\leq x_j\leq u_j \hbox{ for all $j \in [d]$}\}.$$ Clearly $P_v\subseteq P\subseteq Q \subseteq Q_v$, and $Q_v$ is a polytope. Moreover, $S_\rho=S, P_\rho=P$, and $Q_\rho=Q$ for the root $\rho$ of $\tau$. We now show how to obtain an extended formulation $T_v$ for the pair $(P_v,Q_v)$ given extended formulations $T_{v_i}$'s for $(P_{v_i},Q_{v_i})$, $i=0,1$, where $v_0$ (resp. $v_1$) are the two children nodes of $v$ in $\tau$.
	
	Assume first that $v$ is labelled $A$. Then we have $S_v^T=\bmat{S_{v_0}& S_{v_1}}^T$ (up to permutation of rows), since the bit sent by Alice at $v$ splits $S_v$ in two rectangles by rows -- those corresponding to rows where she sends $1$ and those corresponding to rows where she sends $0$. Therefore $P_v=P_{v_0}=P_{v_1}$ and $Q_v=Q_{v_0}\cap Q_{v_1}$. Hence we have $P_v\subseteq \pi_0 (T_{v_{0}})\cap \pi_1(T_{v_{1}})\subseteq Q_v$, where for $i=1,2$, $\pi_i$ is a projection from the space of $T_{v_i}$ to $\R^d$. An extended formulation for  $T_v:=\pi_0 (T_{v_{0}})\cap \pi_1(T_{v_{1}})$ can be obtained efficiently by juxtaposing the formulations of $T_{v_{0}}, T_{v_{1}}$. 
	
	Now assume that $v$ is labelled $B$. Then similarly we have $S_v=\bmat{S_{v_0}& S_{v_1}}$ (up to permutations of columns). Hence, $P_v=\conv\{P_{v_0}\cup P_{v_1}\}$ and $Q_v=Q_{v_0}=Q_{v_1}$, which implies $P_v\subseteq \conv\{\pi_0(T_{v_{0}})\cup \pi_1(T_{v_{1}})\}\subseteq Q_v$.
	An extended formulation for  $T_v:=\conv\{\pi_0(T_{v_{0}})\cup \pi_1(T_{v_{1}})\}$ can be obtained efficiently by applying Theorem \ref{thm:balas} to the formulations of $T_{v_{0}}, T_{v_{1}}$. 
	Iterating this procedure, in a bottom-up approach we can obtain an extended formulation for $(P,Q)$ from extended formulations of $(P_v,Q_v)$, for each leaf $v$ of the protocol.
	
	We now bound the encoding size of formulation. If $T_v = \pi_0 (T_{v_{0}})\cap \pi_1(T_{v_{1}})$, then $\sigma_+(T_v)\leq \sigma_+(T_{v_{0}})+\sigma_+(T_{v_{1}})$. Consider now $T_v=\conv\{\pi_0(T_{v_{0}})\cup \pi_1(T_{v_{1}})\}$. From Theorem \ref{thm:balas} we have $\sigma_+(T_v)\leq \sigma_+(T_{v_{0}})+\sigma_+(T_{v_{1}})+O(d)$. Since the binary tree associated to the protocol is complete, it has size linear in the number of leaves, 
	hence for the final formulation $T_\rho$ we have 
	
	\vspace{-.25cm}
	
	$$\sigma_+(T_\rho)\leq\sum_{R\in \calR} (\sigma_+(T_R)+O(d))=O\left(\sum_{R\in \calR} \sigma_+(T_R)\right),$$
	 where the last equation is justified by the fact that we can assume $\sigma_+(T_R)\geq d$ for any $R \in {\cal R}$. The bounds on the size of $T_\rho$ and on the time needed to construct the formulation are derived analogously. \end{proof}

A couple of remarks on Theorem~\ref{thm:algdetgeneral} are in order. The reader may recognize similarities between the proof of Theorem~\ref{thm:algdetgeneral} and that of the main result in~\cite{fiorini1711strengthening}, where a technique is given to construct approximate extended formulations for polytopes using Boolean formulas. While similar in flavour, those two results seem incomparable, in the sense that one does not follow from the other. 
They both fall under a more general framework, in which one derives extended formulations for a polytope by associating a tree to it, starting from simpler formulations and taking intersection and convex hulls following the structure of the tree. However, we are not aware of any other applications of this framework, hence we do not discuss it here.

The formulation that is produced by Theorem~\ref{thm:algdetgeneral} may not have \emph{exactly} the form given by the corresponding protocol. Also, even for the special case $P=Q$, the proof relies on the version of Yannakakis' theorem for pairs of polytopes. On the other hand, it does not strictly require that we reach the leaves of the protocol -- a similar bottom-up approach would work starting at any node $v$, as long as we have an  extended formulation for $(P_v,Q_v)$. However, if we indeed reach the leaves, there is an extended formulation for each $(P_R,Q_R)$ that has a very special structure, as next lemma shows. 

\begin{lem}\label{lem:ef-leaves}
Using the notation from Theorem~\ref{thm:algdetgeneral}, let $R \in {\cal R}$ and denote by $\Lambda_R\geq 0$ its value. Then we have that $P_R\subseteq T_R^* \subseteq Q_R$, where
\begin{equation}\label{eq:ef-from-leaves}
T_R^*:=\{ A_R x + \mathbf{1}{\Lambda_R } = b_R, \, \ell_j \leq x_j \leq u_j \hbox{ for all $j \in [d]$}\},\end{equation}
$\mathbf{1}$ is the all-1 vector of appropriate length, and $A_R$ (resp. $b_R$) is the submatrix (resp. subvector) of the constraint matrix  (resp. of the right-hand side) describing $Q$ corresponding to rows of $R$.
\end{lem}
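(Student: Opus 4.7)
The plan is to verify the two inclusions $P_R \subseteq T_R^*$ and $T_R^* \subseteq Q_R$ directly from the definition of $T_R^*$ and the fact that $R$ is a monochromatic rectangle of value $\Lambda_R$ in the slack matrix $S$ of $(P,Q)$.

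First I would prove $P_R \subseteq T_R^*$. Since $T_R^*$ is defined by linear equalities and inequalities, it is convex, so it suffices to check that each generator $x_j^*$ of $P_R$ (i.e.\ every $j$ that is a column index of $R$) lies in $T_R^*$. For any row $i$ of $R$, monochromaticity gives $S_{ij} = b_i - a_i^\top x_j^* = \Lambda_R$, i.e.\ $a_i^\top x_j^* + \Lambda_R = b_i$; collecting the rows of $R$ yields $A_R x_j^* + \mathbf{1}\Lambda_R = b_R$. The bound constraints $\ell_k \leq x^*_{j,k} \leq u_k$ hold because $x_j^* \in P \subseteq Q$.

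Next I would prove $T_R^* \subseteq Q_R$. Take any $x \in T_R^*$. For every row $i$ of $R$, the equation $a_i^\top x + \Lambda_R = b_i$ combined with $\Lambda_R \geq 0$ gives $a_i^\top x \leq b_i$. The bound constraints $\ell_j \leq x_j \leq u_j$ are directly included in the description of $T_R^*$, and these together are precisely the constraints defining $Q_R$.

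Neither step should present a real obstacle: the whole content of the lemma is the observation that a monochromatic rectangle of value $\Lambda_R$ encodes, on its rows, the system of linear equations $A_R x = b_R - \mathbf{1}\Lambda_R$ satisfied by every vertex indexed by its columns, and conversely that any $x$ satisfying this system (with $\Lambda_R \geq 0$) satisfies the rectangle's row inequalities. The only thing worth being careful about is not to confuse $\Lambda_R$ being written on the left-hand side (so that the $y$-variable is implicitly fixed to $\Lambda_R$) with it acting as a separate free variable; once this is kept straight, the inclusions are immediate.
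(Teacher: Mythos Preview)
Your proof is correct and follows exactly the same approach as the paper: establish $P_R\subseteq T_R^*$ by checking that each generating vertex satisfies $A_R x + \mathbf{1}\Lambda_R = b_R$ (from monochromaticity of $R$) together with the box constraints, and establish $T_R^*\subseteq Q_R$ by noting that $\Lambda_R\geq 0$ turns the equations into the required inequalities. The paper's proof is terser (it omits the explicit remark about the box constraints), but the argument is the same.
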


\begin{proof}
By construction, every vertex of $P_R$ satisfies $A_R x + \mathbf{1}\Lambda_R = b_R$, hence $P_R\subseteq T_R^*$.
Moreover, since $\Lambda_R\geq 0$, $A_R x \leq b_R$ is clearly valid for $T_R^*$, showing $T_R^* \subseteq Q_R$. 
\end{proof}

We remark that, in case the value of a monochromatic rectangle $R$ is not known, one can just replace in~\eqref{eq:ef-from-leaves} $\Lambda_R$ with a variable $y_R$, together with the constraint $y_R\geq 0$: similarly as in the proof above, one checks that the resulting $T_R^*$ is an extended formulation of $(P_R,Q_R)$.

\subsection{Applications} \label{subsec:applications}

\noindent {$(STAB(G),QSTAB(G))$.} We now describe how to apply Theorem \ref{thm:algdetgeneral} to the protocol from Theorem \ref{thm:protocol-pg} as to obtain an extended formulation for $(STAB(G),QSTAB(G))$ in time $n^{O(\log n)}$. In particular, this gives an extended formulation for $STAB(G)$, $G$ perfect within the same time bound.

      We first give a modified version of the protocol from \cite{yannakakis1991expressing}, stressing a few details that will be important in the following. The reader familiar with the original protocol can immediately verify its correctness.
Let $v_1,\dots,v_n$ be the vertices of $G$ in any order. At the beginning of the protocol, Alice is given a clique $C$ of $G$ as input and Bob a stable set $S$, and they want to compute the entry of the slack matrix of $\stab(G)$ corresponding to $C,S$, i.e. to establish whether $C, S$ intersect or not.

At each stage of the protocol, the vertices of the current graph $G=(V,E)$ are partitioned between \emph{low degree} $L$ (i.e., at most $|V|/2$) and \emph{high degree} $H$. Suppose first $|L|\geq |V|/2$. Alice sends (i) the index of the low degree vertex of smallest index in $C$, or (ii) $0$ if no such vertex exists. In case (i), if $v_i\in S$, then $C\cap S\neq \emptyset$ and the protocol ends; else, $G$ is replaced by $G\cap N(v_i)\setminus\{v_j \in L : j <i\}$, where $G\cap U$ denotes the subgraph of $G$ induced by $U$. In case (ii), if Bob has no high degree vertex, then $C \cap S= \emptyset$ and the protocol ends, else, $G$ is replaced by $G\cap H$. If conversely $|L|< |V|/2$, then the protocol proceeds symmetrically to above: Bob sends (i) the index of the high degree vertex of smallest index in $S$, or (ii) $0$ if no such vertex exists. In case (i), if $v_i\in C$, then $C\cap S\neq \emptyset$ and the protocol ends; else, $G$ is replaced by $G\cap \bar N(v_i)\setminus\{v_j \in H : j <i \}$. In case (ii), if Alice has no low degree vertex, then $C \cap S= \emptyset$ and the protocol ends, else, $G$ is replaced by $G\cap L$. Note that  at each step the number of vertices of the graph is decreased by at least half, and $C$ and $S$ do not intersect in any of the vertices that have been removed.

Now let $S$ be the slack matrix of the pair $(STAB(G),QSTAB(G))$. Each monochromatic rectangle $R\in \calR$ in which the protocol from Theorem \ref{thm:protocol-pg} partitions $S$ is univocally  identified by the list of cliques and of stable sets corresponding to its rows and columns. With a slight abuse of notation, for a clique $C$ (resp. stable set $S$) whose corresponding row is in $R$, we write $C\in R$ (resp. $S \in R$), and we also write $(C,S)\in R$. 
We let $P_R$ be the convex hull of stable sets $S \in R$ and $Q_R$ the set of clique inequalities corresponding to cliques $C \in R$, together with the unit cube constraints. 

We need a fact on the structure of $\calR$, for which we introduce some more notation: for a (monochromatic) rectangle $R\in \calR$, let $C_R$ be the set of vertices sent by Alice and $S_R$  the set of vertices sent by Bob during the corresponding execution of the protocol. Note that $C_R$ is a clique and $S_R$ is a stable set. 

\begin{obs}\label{eq:obs-CrSr}
For each $R\in \calR$, there is exactly one clique $C$ and one stable set $S$ of $G$ such that $C=C_R$ and $S=S_R$. Conversely, given a clique $C$ and a stable set $S$, there is at most one rectangle $R\in\calR$ such that $C=C_R$ and $S=S_R$. Notice that $|C_R|+|S_R|\leq \logn$ for any $R\in \calR$.
\end{obs}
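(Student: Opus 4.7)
My plan is to establish the three parts of the observation in order.

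For the first, I would note that every input pair $(C,S)\in R$ traces the same root-to-leaf path of $\tau$, so Alice and Bob transmit identical messages along this execution for any such $(C,S)$; in particular, $C_R$ and $S_R$ are well-defined (they do not depend on the choice of $(C,S)\in R$). Since every vertex index Alice transmits necessarily belongs to her input clique, $C_R\subseteq C$ and hence $C_R$ is itself a clique; symmetrically $S_R\subseteq S$ is a stable set.

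The second assertion is the heart of the observation. My approach is to show that the pair $(C_R,S_R)$ is itself a valid input that leads to the leaf of $R$. Once this is established, two distinct rectangles $R_1\ne R_2$ with $(C_{R_1},S_{R_1})=(C_{R_2},S_{R_2})$ would both contain this common input, contradicting the fact that $\calR$ partitions the slack matrix. To verify $(C_R,S_R)\in R$, I would simulate the protocol along the $R$-execution with $(C_R,S_R)$ in place of the original $(C,S)$ and check node-by-node that the transmitted messages agree. At an Alice vertex-send, the rule picks the smallest-index vertex $v$ of $C\cap L$, where $L$ denotes the low-degree set of the current graph; by construction $v\in C_R\subseteq C$, and any element of $C_R\cap L$ also lies in $C\cap L$, so $v$ is likewise the smallest-index vertex of $C_R\cap L$. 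A 0-send matches because $C\cap L=\emptyset$ is equivalent to $C_R\cap L=\emptyset$ (the $R$-execution can only populate $C_R$ from elements of $C\cap L$). The symmetric statements handle Bob's side. Because the speaker at each node depends only on the current graph, and that graph evolves deterministically from the messages sent so far, the two executions stay in lockstep all the way to the leaf of $R$.

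For the size bound $|C_R|+|S_R|\le\logn$, I would use a halving argument: each non-zero transmission contracts the current graph to a subset of $N(v_i)$ (resp.\ $\bar N(v_i)$) for a low-degree (resp.\ high-degree) vertex $v_i$, hence to at most $|V|/2$ vertices. Starting from $n$ vertices, the graph is exhausted after at most $\logn$ such halvings, and $|C_R|+|S_R|$ counts precisely these non-zero transmissions.

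The main obstacle I anticipate is the key claim that the simulation with $(C_R,S_R)$ reproduces the $R$-execution throughout. The delicate point is verifying that every type of message emitted by the protocol — vertex indices, 0 indicators, and the implicit confirmation that a received vertex does or does not belong to the receiver's input — is determined identically under the restricted input, using only the inclusions $C_R\subseteq C$ and $S_R\subseteq S$ together with the fact that the current graph (and hence the identity of the speaker) depends solely on the transmitted history.
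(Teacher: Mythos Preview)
The paper states this observation without proof; the relevant reasoning is scattered in the surrounding text and in the proof of Lemma~\ref{prop:rectangles}. Your plan --- show $(C_R,S_R)\in R$ by simulating the execution, then deduce injectivity from the fact that $\calR$ is a partition --- is exactly the argument the paper uses implicitly (the simulation is precisely the proof of Lemma~\ref{prop:rectangles}, and the paper explicitly states ``on input $(C_R,S_R)$, the protocol terminates in $v$'' right after the observation). Your treatment of vertex-sends, $0$-sends, and the halving bound is correct.

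There is one genuine subtlety you flag but do not resolve. For a value-$0$ rectangle the execution ends with (say) Alice sending $u$ and Bob confirming $u\in S$. In the simulation with input $(C_R,S_R)$, this last confirmation matches only if $u\in S_R$. The inclusions $C_R\subseteq C$ and $S_R\subseteq S$ give the wrong direction here: they ensure that a ``no'' is preserved, not a ``yes''. If one reads $S_R$ literally as ``the vertices Bob sent'', then $u\notin S_R$, the simulation diverges, and in fact a value-$0$ leaf and the value-$1$ leaf obtained by continuing past it can share the same $(C_R,S_R)$ --- so the injectivity claim would actually fail. The paper's intended convention, visible from its later assertion that $C_R\cap S_R=\{u\}$ for value-$0$ rectangles, is that the terminally confirmed vertex is counted in \emph{both} $C_R$ and $S_R$. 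Under that convention $u\in S_R$ holds by definition, your simulation goes through verbatim, and the rest of your argument is complete. You should make this convention explicit rather than relying on the inclusions alone.
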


Recall that, to apply Theorem \ref{thm:algdetgeneral} we need to have a description of $\tau,\ell$ and the extended formulations for $(P_R,Q_R)$, for each $R\in \calR$. These are computed as follows.
\begin{enumerate}
\item \emph{$\tau$, $\ell$, and $(C_R,S_R)$ for all $R \in {\cal R}$.}  Enumerate all cliques and stable sets of $G$ of combined size at most $\logn$ and run the protocol on each of those input pairs. Each of those inputs gives a path in the tree (with the corresponding $\ell$), terminating in a leaf $v$, corresponding to a rectangle $R$.
By Observation \ref{eq:obs-CrSr}, $\tau$ is given by the union of those paths. Moreover, observe that, for each $R \in {\cal R}$, $C_R$ (resp. $S_R$) is contained in all cliques $C$ (resp. stable sets $S$) such that, on input $(C,S)$, the protocol terminates in the leaf $v$ corresponding to $R$. In particular, on input $(C_R,S_R)$, the protocol terminates in $v$. Hence, the inclusion-wise minimal such $C$ and $S$ give $C_R,S_R$.  

\item \emph{For each leaf of $\tau$ corresponding to a rectangle $R\in {\cal R}$, give a compact extended formulation $T_R$ for the pair $(P_R,Q_R)$.}  
We follow an approach inspired by Lemma \ref{lem:ef-leaves}. We first need the following fact on the structure of the rectangles.

\begin{lem}\label{prop:rectangles}
	Let $R=(C_R,S_R)\in \calR$ and $(C,S)\in R$.
	Then $(C',S')\in R$ for any $C'$ such that $C_R\subseteq C' \subseteq C$ and $S'$ such that $S_R\subseteq S' \subseteq S$.
\end{lem}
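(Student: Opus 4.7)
The plan is to prove by induction along the execution path that the protocol run on input $(C',S')$ visits exactly the same sequence of nodes of $\tau$ as on input $(C,S)$, and hence terminates at the same leaf, which is by hypothesis the leaf associated with $R$; this will give $(C',S')\in R$. For each node $v$ visited on the execution on $(C,S)$, let $C_v$ (resp.\ $S_v$) denote the set of vertices Alice (resp.\ Bob) has sent along the portion of the path from the root to $v$. By construction $C_v\subseteq C_R$ and $S_v\subseteq S_R$, so the sandwich $C_v\subseteq C'\subseteq C$ and $S_v\subseteq S'\subseteq S$ is maintained throughout the induction.

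For the inductive step, suppose both executions are at a common node $v$ with common current graph $G_v$. Without loss of generality assume it is Alice's turn (Bob's case is symmetric). Alice sends either the smallest-index low-degree vertex of $G_v$ lying in her clique, or the symbol $0$ if no such vertex exists. If on $(C,S)$ she sends $v_i$, then $v_i$ lies on the path and hence $v_i\in C_R\subseteq C'$; moreover any low-degree vertex of $G_v$ in $C'$ is also in $C$ (as $C'\subseteq C$) and has index $\geq i$, so Alice sends the same $v_i$ on $(C',S')$. If she sends $0$ on $(C,S)$, then $C\cap L_v=\emptyset$ forces $C'\cap L_v=\emptyset$ and $0$ is sent on $(C',S')$ too. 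The protocol then branches on a Yes/No property of $S$: either ``$v_i\in S$?'' (if $v_i$ was sent) or ``does $S$ meet the high-degree vertices of $G_v$?'' (if $0$ was sent). A ``No'' answer is preserved in both cases by $S'\subseteq S$. For ``Yes'' in the first case, $v_i$ is the intersection witness of leaf $R$, and by the characterization of $S_R$ as the inclusion-wise minimum stable set reaching $R$ (Observation~\ref{eq:obs-CrSr}) we get $v_i\in S_R\subseteq S'$, so the same branch is taken. For ``Yes'' in the second case, the execution continues on $G_v\cap H_v$; applying the induction to the remaining rounds, Bob must either transmit a subsequent vertex (necessarily lying in $H_v$ and hence in $S_R$) or the execution eventually terminates at an intersection witness in $H_v$, so that $S_R\cap H_v\neq\emptyset$ and thus $S'\supseteq S_R$ also meets $H_v$. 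Iterating, the two executions reach the same leaf and $(C',S')\in R$.

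The main technical obstacle is the ``$0$-send'' branching: at that moment no specific vertex is communicated, yet one must argue that $S_R$ itself already contains a witness of the ``Yes'' answer. This is handled precisely by using the downstream structure of the protocol---every subsequent round is confined to the subgraph $G_v\cap H_v$, so whichever vertex is eventually identified by Bob (or by the intersection witness at the terminating leaf) necessarily lies in $H_v$ and is forced into $S_R$. With this point settled, the inductive argument on the length of the path carries through without further subtleties and delivers the lemma.
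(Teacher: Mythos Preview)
Your overall approach---inducting along the execution path to show that $(C',S')$ and $(C,S)$ take the same branch at every node---is the same idea as the paper's proof, which phrases it more compactly as removing one vertex of $C\setminus C_R$ (then, ``symmetrically'', one of $S\setminus S_R$) at a time and observing that a never-sent vertex plays no role in the sender's choices.

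There is, however, a genuine gap in your handling of the ``$0$-send'' branch. You assert that after Alice sends $0$ and Bob answers ``Yes'', the remaining execution on $(C,S)$ must either have Bob transmit some vertex or terminate at a value-$0$ leaf, so that in either case $S_R\cap H_v\neq\emptyset$. This dichotomy is not justified and can fail: the restricted execution in $G_v\cap H_v$ may terminate at a value-$1$ leaf with Bob never sending a vertex (e.g.\ all remaining stages are Alice-stages, Alice sends a vertex not in $S$, and the graph collapses to empty). In that situation nothing forces any element of $H_v$ into $S_R$, and your inference $S'\supseteq S_R\Rightarrow S'\cap H_v\neq\emptyset$ does not follow. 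The phrase ``applying the induction to the remaining rounds'' does not rescue this: your inductive hypothesis concerns agreement of the two executions \emph{up to} the current node, not any structural property of the \emph{future} of the execution on $(C,S)$ alone. (To be fair, the paper's terse ``apply the symmetric for $v\in S\setminus S_R$'' likewise does not spell out why Bob's one-bit Yes/No response to Alice's $0$ is preserved when a vertex is removed from $S$; whether one reads that as the same gap or as an implicit convention on what counts as ``sent by Bob'' is a matter of interpretation.)
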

\begin{proof}
Note that a vertex $v\in C\setminus C_R$ is not sent during the protocol on input $(C,S)$. Hence, the execution of the protocols on inputs $(C,S)$ and $(C-v,S)$ coincides. Indeed at every step Alice chooses the first vertex of low degree in her current clique, and if $v$ is never chosen, having $v$ in the clique does not affect her choice. Moreover, the choice of Bob only depends on his current stable set and the vertices previously sent by Alice. In particular, we have 
$(C\setminus \{v\},S) \in R$. Iterating the argument (and applying the symmetric for $v \in S\setminus S_R$) we conclude the proof.
\end{proof}

Now, let $R$ be a rectangle of value 1. We claim that $P_R\subseteq T_R \subseteq Q_R$, where
\begin{align*}
T_R=\{x\in \R^d: \;
& x_v= 0 \;  \;\forall\; v\in C_R \mbox{ or } v\in V\setminus C_R: C_R+v\in R,\\
&  0 \leq x \leq 1\}.\end{align*}
The first inclusion is due to Lemma \ref{prop:rectangles} and to the fact that, since $R$ has value 1, all vertices of $P_R$ correspond to stable sets that are disjoint from cliques of $R$. For the second inclusion, given a clique $C$ of $R$ and $v\in C$, using again Lemma \ref{prop:rectangles} we have that $C_R+v$ is also a clique of $R$, so for $x\in T_R$ we have $x(C)=\sum_{v\in C} x_v=0\leq 1$. Observe that, in order to decide if $C_R + v \in R$, it suffices to run the protocol on input $(C_R+v, S_R)$.

Finally, let $R$ be a rectangle of value $0$. We have $C_R\cap S_R=\{u\}$ for some vertex $u$. One can conclude, in a similar way as before, that $P_R\subseteq T_R \subseteq Q_R$, where
\begin{align*}
T_R=\{x\in \R^d: \;
& x_u=1,\\
& x_v= 0 \;  \;\forall\; v\in C_R-u \mbox{ or } v\in V\setminus C_R: C_R+v\in R\\
& 0 \leq x \leq 1\}.\end{align*}
\end{enumerate}

We conclude by observing that the approach described above proceeds by obtaining the leaves of $\tau$ through an enumeration of all cliques and stable sets of combined size ${\logn}$, and then reconstructing $\tau$. This takes time $n^{\Theta(\logn)}$. However, one could instead try to construct $\tau$ from the root, by distinguishing cases for each possible bit sent by Alice or Bob. This intuition is the basis for the alternative formulation that we give in Section~\ref{sec:direct}.

\medskip

\noindent {\bf Min-up/min-down polytopes.} We give here another application of Theorem~\ref{thm:algdetgeneral}. Min-up/min-down polytopes were introduced in \cite{lee2004min} to model scheduling problems with machines that have a physical constraint on the frequency of switches between the operating and not operating states. For a vector $x\in\{0,1\}^T$ and an index $i$ with $1\leq i\leq T-1$, let us call $i$ a \emph{switch-on index} if $x_i=0$ and $x_{i+1}=1$, a \emph{switch-off index} if $x_i=1$ and $x_{i+1}=0$. 
  For $L,\ell, T \in \N$ with $\ell\leq T, L\leq T$, the min-up/min-down polytope $P_T(L,\ell)$ is defined as the convex hull of vectors in $\{0,1\}^T$ satisfying the following: for any $i,j$ switch indices, with $i<j$, we have $j-i\geq \ell$ if $i$ is a switch-on index and $j$ a switch-off index and $j-i\geq L$ if viceversa $i$ is a switch-off index and $j$ a switch-on index. In other words, in these vectors (seen as strings) each block of consecutive zeros (resp. ones) has length at least $L$ (resp. $\ell$). In \cite{lee2004min}, the following is shown:
\begin{thm}\label{thm:minupdown}
The following is a complete, non-redundant description of $P_T(L,\ell)$:
\begin{align*}
   \sum_{j=1}^k (-1)^{j}x_{i_j}  \leq 0\;\; & \;\forall\; \{i_1,\dots,i_k\}\subseteq [T]: k \mbox { odd and } i_k-i_1\leq L \\
    \sum_{j=1}^k (-1)^{j-1}x_{i_j} \leq 1 \;& \;\forall\; \{i_1,\dots,i_k\}\subseteq [T]: k \mbox { odd and } i_k-i_1\leq \ell.   
\end{align*}
\end{thm}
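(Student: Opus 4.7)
My plan is to prove Theorem~\ref{thm:minupdown} in three steps: validity, completeness, and non-redundancy.

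\textbf{Validity.} For each $0/1$ vector $x \in P_T(L,\ell)$ and each listed inequality, I would show that $x$ satisfies it. For the first family, the alternating sum $\sum_{j=1}^k (-1)^j x_{i_j}$ with $k$ odd can be interpreted as a signed count of the $1$-entries of $x$ at the chosen positions. The constraint $i_k - i_1 \leq L$ together with the min-down-block length forces $x$ restricted to $[i_1, i_k]$ to have a very restricted pattern of runs, making the sum nonpositive. A case analysis on $x\vert_{[i_1, i_k]}$, or equivalently an induction on $k$ that collapses a consecutive pair $i_j, i_{j+1}$ into the shorter inequality, yields the bound. The second family is handled symmetrically, using the $\ell$ bound and the min-up constraint.

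\textbf{Completeness.} Let $P'$ denote the polytope defined by the listed inequalities; I want to show $P' \subseteq P_T(L,\ell)$. My first approach would be induction on $T$ combined with a disjunctive argument: partition $P_T(L,\ell)$ according to the value of $x_T \in \{0,1\}$, recognize each slice (after translation/padding) as a smaller min-up/min-down polytope having the claimed description by the inductive hypothesis, then apply Theorem~\ref{thm:balas} to get an extended formulation for their convex hull. Projecting out the auxiliary variables via Fourier--Motzkin and matching the resulting inequalities to the listed system then closes the induction. A cleaner alternative I would pursue in parallel is to exhibit a totally unimodular reformulation of the constraint matrix (or a network-flow description), which would immediately force integrality of the vertices of $P'$, since min-up/min-down polytopes are structurally close to interval-scheduling polyhedra.

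\textbf{Non-redundancy.} For each listed inequality, I would exhibit $\dim P_T(L,\ell)$ affinely independent $0/1$ feasible vectors lying on the corresponding hyperplane, constructed by toggling individual minimum-length blocks of a canonical tight vector while preserving the min-up/min-down constraints. The main obstacle is completeness: the alternating-sign inequalities form a densely overlapping family indexed by arbitrary subsets of short windows, so either carrying out the disjunctive induction (with a clean, closed-form projection step) or finding the totally unimodular reformulation requires identifying the right structural invariant. The validity and facet-exhibition steps I expect to be routine combinatorial bookkeeping once the right framework is set up.
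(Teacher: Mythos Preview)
The paper does not prove Theorem~\ref{thm:minupdown}; it is quoted verbatim from \cite{lee2004min} (``In \cite{lee2004min}, the following is shown:'') and used only as background for the subsequent protocol and extended formulation. So there is no ``paper's own proof'' to compare against; the authors simply import the result.

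As for the substance of your sketch: it is a reasonable outline, but it is not a proof. The validity and non-redundancy parts are indeed routine once one sits down and does the bookkeeping, and your plan for those is fine. The real content is completeness, and there you offer two alternatives without carrying either one out. The disjunctive induction on $T$ via Theorem~\ref{thm:balas} followed by Fourier--Motzkin is in principle sound, but the projection step is exactly where all the difficulty hides: you would need to show that every inequality produced by eliminating the auxiliary variable $\lambda$ is dominated by one already in your list, and with an exponential-size alternating family this is not automatic. The second route, finding a totally unimodular or network-flow reformulation, is closer to what Lee--Leung--Margot actually do (they give a direct integrality argument), but you have not identified the reformulation, only conjectured that one should exist. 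Until one of these two branches is actually executed, the completeness claim remains open in your write-up.
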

 When $\ell,L$ are not constant, $P_T(L,\ell)$ has an exponential number of facets. An extended formulation for $P_T(L,\ell)$ is given in \cite{rajan2005minimum}.
We now provide a simple deterministic protocol for its slack matrix $S^{\ell,L,T}$. As a simple consequence of Theorem \ref{thm:algdetgeneral}, this will give a compact extended formulation that can be obtained efficiently.  For simplicity we only consider the part of the slack matrix indexed by the first set of inequalities, as the protocol for the second part can be derived in an analogous way. Hence, assume that Alice is given an index set $I=\{i_1,\dots,i_k\}\subseteq [T]$ with $k$ odd and $i_k-i_1\leq L$, and Bob is given a vertex $v$ of $P_T(L,\ell)$, which is univocally determined by its switch indices. Alice sends to Bob the index $i_1$. Assume that $v_{i_1}=0$, the other case being analogous. Then $v$ can have at most one switch-on index and at most one switch-off index in $\{i_1,\dots,i_1+L\}$, hence in particular in $I$. Bob sends to Alice 1 bit to signal $v_{i_1}=0$, and the coordinates of such indices. Hence, Alice now knows exactly the coordinates of $v$ on $I$ and can output the slack corresponding to $I,v$. In total, it is easy to see that the protocol has complexity $\lceil \log T \rceil + 2\max(\lceil\log L\rceil, \lceil\log \ell\rceil))$, and can be modeled by a tree $\tau$ of size $O(T\cdot (L+\ell)^2)$. 

Finally, in order to apply Theorem \ref{thm:algdetgeneral} we would need to obtain a compact extended formulation for the pair $(P_R,Q_R)$ for each rectangle $R$ corresponding to a leaf of $\tau$. While this can be achieved by applying Lemma \ref{lem:ef-leaves} with a tedious case distinction, we proceed in a simpler, alternative way. We exploit the fact that, as remarked above, when applying Theorem \ref{thm:algdetgeneral}, we do not necessarily need to give formulations to the leaves $\tau$, but we can start from their ancestors as well.
Consider a step of the protocol, corresponding to a node $v$ of $\tau$, where the following communication has taken place: Alice sent an index $i_1$, Bob sent a bit to signal that $v_{i_1}=1$ and the switch-off index $i^*$ (the other cases are dealt with similarly). This determines a rectangle $S_v=R$ that is not monochromatic (as its value depends on Alice's input) and has as columns all the vertices $x$ of $P_T(L,\ell)$ with 
\begin{equation}\label{eq:JonLee}x_{i_1}=\dots=x_{i^*}=1, \; x_{i^*+1}=\dots=x_{i_1+L}=0\end{equation}
and as rows all the inequalities corresponding to subsets $I$ whose smallest index is $i_1$. Consider the polytope
$$ T_R :=\{ 0 \leq x \leq 1 : \, x \hbox{ satisfies}~\eqref{eq:JonLee}\}.$$

Clearly, we have $P_{R}\subseteq T_R \subseteq Q_{R}$. Now, it is easy to see that a similar formulation can be given to all nodes $v$ of $\tau$ whose children are leaves. Hence we can conclude the following.

\begin{thm}\label{thm:minupdownEF}
Let $L,\ell, T$ be positive integers with $\ell\leq T, L\leq T$. The min-up/min-down polytope $P_T(L,\ell)$ has an extended formulation of size $O(T\cdot (L+\ell)^2)$ that can be written down in time $O(T\cdot (L+\ell)^2)$.
\end{thm}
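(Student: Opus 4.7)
The plan is to apply Theorem~\ref{thm:algdetgeneral} to the protocol described just above, using the variant (pointed out in the remark after its proof) that permits stopping the bottom-up recursion at any set of ancestors of the leaves, provided we can supply extended formulations for the corresponding pairs $(P_v,Q_v)$ at those stopping nodes. Here I would stop one level above the leaves, namely at the nodes of $\tau$ whose children are leaves, which is exactly where the excerpt constructs the formulation $T_R$ associated with~\eqref{eq:JonLee}.

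First I would write down the tree $\tau$ and its labelling $\ell$ explicitly. Alice has $T$ possible first messages $i_1\in[T]$; for each such message, Bob has only $O((L+\ell)^2)$ possible continuations, namely one bit revealing $x_{i_1}$, followed by a switch-on and a switch-off index within a window of length $L+1$ or $\ell+1$ (according to whether $x_{i_1}$ is $0$ or $1$ and which family of inequalities from Theorem~\ref{thm:minupdown} we are in). A straightforward enumeration produces $\tau$ and $\ell$ in time $O(T(L+\ell)^2)$, with the same bound on $|\mathcal V(\tau)|$.

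Second, for every node $v$ whose two children are leaves, I would write down $T_v$ by the same recipe as in the excerpt, adapted symmetrically to the four cases (the value of $x_{i_1}$ and the family of inequalities). In each case $T_v$ consists of $0\le x\le 1$ together with a handful of equations fixing the coordinates of $x$ on the relevant window. The inclusion $P_v\subseteq T_v$ is immediate since every vertex of $P_v$ must agree with the messages exchanged so far on the window, and the inclusion $T_v\subseteq Q_v$ reduces in each case to checking that any odd-cardinality alternating-sign inequality from Theorem~\ref{thm:minupdown}, supported inside the window and evaluated on the fixed values, yields a non-positive (respectively non-positive-minus-one) left-hand side. This is a routine but non-trivial arithmetic check that must be done once per case.

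Third, with these data in hand Theorem~\ref{thm:algdetgeneral} assembles the final formulation in a time (and to a size) bounded by $\sum_v \sigma_+(T_v)$ up to additive $O(d)$ terms at Balas steps. Since there are $O(T(L+\ell)^2)$ stopping nodes, the claimed size and time bound $O(T(L+\ell)^2)$ follows, with the proviso that the box constraints $0\le x\le 1$, being shared by every $T_v$, are factored out and appended once to the outermost formulation rather than being duplicated at every node of $\tau$. This bookkeeping of the box constraints is the only real obstacle I foresee; the case analysis for the four variants of $T_v$ and the tree construction are both mechanical.
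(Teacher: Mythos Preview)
Your approach is essentially the paper's own (the ``proof'' is just the discussion preceding the theorem): both apply Theorem~\ref{thm:algdetgeneral} in its stopping-above-the-leaves variant to the described protocol, using the formulations $T_R$ built from~\eqref{eq:JonLee}.

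You have, however, put your finger on a subtlety the paper glosses over. Each $T_R$ carries the full box $0\le x\le 1$, so $\sigma(T_R)=\Theta(T)$; a direct invocation of Theorem~\ref{thm:algdetgeneral} therefore gives size $\sum_R\sigma(T_R)=O(T(L+\ell)^2)\cdot\Theta(T)=O(T^2(L+\ell)^2)$, not the stated $O(T(L+\ell)^2)$. Your proposed remedy---factor out the box and append it once at the root---is the right instinct but is not justified as written: at Bob nodes the construction calls Theorem~\ref{thm:balas}, which requires its two inputs to be \emph{polytopes}, so you cannot simply strip the box from the $T_R$'s before taking convex hulls. One clean way to recover the bound is to exploit that, for a fixed first message $i_1$, every stopping-node formulation constrains only the coordinates in the window $W_{i_1}$ of length $L{+}1$ (resp.\ $\ell{+}1$), and is free on the rest. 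Hence the convex hull over Bob's branches can be carried out \emph{in the coordinates of $W_{i_1}$ only}---e.g.\ via the trivial vertex formulation, of size $O((L+\ell)^2)$ since there are $O((L+\ell)^2)$ valid patterns on the window. Intersecting these $T$ window-formulations together with a single global copy of $[0,1]^T$ then yields the claimed $O(T(L+\ell)^2)$.
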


\section{Direct derivations}\label{sec:direct} 
In this section, we show how to construct certain extended formulations directly from protocols, without resorting to Theorem~\ref{thm:algdetgeneral}.

\medskip

\noindent {\bf Complement graphs.} Denote by $\bar G$ the complement of a graph $G$. An extended formulation for $(\stab(\bar G),\qstab(\bar G))$ can be efficiently obtained from an extended formulation of $(\stab(G),\qstab(G))$, keeping a similar dimension (including the number of equations). 

We begin by observing that the following holds, for any graph $G$: $\stab(G)\subseteq \{x: x\geq 0, x^Ty\leq 1 \;\forall\; y\in \stab(\bar{G})\}= \qstab(G)$. We will prove a stronger statement in Lemma \ref{lem:complementpolar}, and in order to efficiently turn this into an extended formulation, we need the following fact. 
\begin{lem}[\cite{Martin91,weltge2015sizes}] \label{lem:polar} Given a non-empty polyhedron $Q$ and $\gamma\in \R$, let $P=\{x: x^Ty\leq \gamma \; \forall \; y\in Q\}$. If $Q=\{y: \exists \;z : Ay+Bz\leq b, Cy+Dz=d\}$, then we have that
$$
P=\{x: \exists \lambda\geq 0, \ \mu: A^T\lambda+C^T\mu=x, \quad B^T\lambda +D^T\mu=0, \ b^T\lambda+d^T\mu\leq \gamma\}.
$$
In particular, $xc(P)\leq xc(Q)+1$.
\end{lem}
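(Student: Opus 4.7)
The plan is to prove this via linear programming duality applied to the maximization of $x^\top y$ over $Q$, where $Q$ is given in extended form. By definition, $x \in P$ if and only if
\[
\max\bigl\{ x^\top y : Ay+Bz\leq b,\ Cy+Dz=d \bigr\} \leq \gamma,
\]
where the maximum is taken over the variables $y,z$ jointly. Since $Q$ is nonempty, the primal program is feasible, so the only two relevant scenarios are that it is bounded with maximum at most $\gamma$, or the maximum (finite or $+\infty$) exceeds $\gamma$.

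I would write down the dual of this LP in the usual way: inequality multipliers $\lambda\geq 0$ for the rows of $Ay+Bz\leq b$, and free multipliers $\mu$ for the rows of $Cy+Dz=d$. Stationarity with respect to $y$ and $z$ yields $A^\top\lambda+C^\top\mu=x$ and $B^\top\lambda+D^\top\mu=0$, and the dual objective to be minimized is $b^\top\lambda+d^\top\mu$. Thus the dual feasible region together with the objective bound $b^\top\lambda+d^\top\mu\leq \gamma$ is exactly the set on the right-hand side of the claimed description of $P$.

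Next I would close the equivalence using strong duality. If $x\in P$, the primal is feasible and bounded (by $\gamma$), so the dual attains the same value; any optimal dual solution certifies membership in the claimed set. Conversely, weak duality shows that whenever a dual feasible pair $(\lambda,\mu)$ with $b^\top\lambda+d^\top\mu\leq\gamma$ exists, every primal feasible $(y,z)$ satisfies $x^\top y\leq\gamma$, so $x\in P$. The mild subtlety I will need to flag is the possibility that the primal is unbounded: in that case weak duality forces the dual to be infeasible, so no candidate $(\lambda,\mu)$ exists, and the equivalence still holds.

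Finally, for the extension complexity bound, I would count inequalities in the derived formulation of $P$: the equations $A^\top\lambda+C^\top\mu=x$ and $B^\top\lambda+D^\top\mu=0$ contribute only equations, the sign constraint $\lambda\geq 0$ contributes one inequality per row of $A$ (i.e., exactly the inequality count in the given formulation of $Q$), and the objective bound adds a single extra inequality. This immediately gives $\xc(P)\leq\xc(Q)+1$. I do not anticipate a real obstacle here; the only point to treat carefully is the unbounded-primal case, which is handled by invoking weak duality rather than strong duality.
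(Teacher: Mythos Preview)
Your argument via LP duality is correct and is precisely the standard proof of this fact; the paper does not give its own proof of this lemma but simply cites it from \cite{Martin91,weltge2015sizes}, where the same duality argument is used. The only cosmetic point is that the bound $\xc(P)\leq\xc(Q)+1$ follows by applying your construction to an extended formulation of $Q$ achieving $\xc(Q)$ inequalities, which you implicitly assume.
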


\begin{lem} \label{lem:complementpolar}
Let $G$ be a graph on $n$ vertices such that $(\stab(\bar{G}), \qstab( \bar G))$ admits an extended formulation $Q$ with $r$ additional variables (i.e. $n+r$ variables in total), $m$ inequalities and $k$ equations. Then $(\stab(G),\qstab(G))$ admits an extended formulation with $m+k$ additional variables, $m+1$ inequalities, $n+r$ equations, which can be written down efficiently given $Q$.
\end{lem}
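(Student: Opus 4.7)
The plan is to apply Lemma~\ref{lem:polar} with $\gamma = 1$ to the formulation $Q$ and intersect the result with the non-negative orthant. Writing $P := \pi(Q)$ for the projection onto the first $n$ coordinates, the hypothesis gives $\stab(\bar G) \subseteq P \subseteq \qstab(\bar G)$. The candidate extension is
$$R := \{ x \in \R^n : x \geq 0,\ x^\top y \leq 1 \text{ for all } y \in P \},$$
and the two things to check are the sandwich $\stab(G) \subseteq R \subseteq \qstab(G)$ and the size/equation count claimed in the statement.

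First I would verify the sandwich, exploiting the elementary duality that cliques of $G$ are stable sets of $\bar G$ and vice versa. For $\stab(G) \subseteq R$: each stable set $S$ of $G$ is a clique of $\bar G$, so every $y \in P \subseteq \qstab(\bar G)$ satisfies $(\chi^S)^\top y = \sum_{v \in S} y_v \leq 1$, and clearly $\chi^S \geq 0$; convexity extends this to all of $\stab(G)$. For $R \subseteq \qstab(G)$: fix $x \in R$ and a clique $K$ of $G$; then $\chi^{K} \in \stab(\bar G) \subseteq P$, whence $\sum_{v \in K} x_v = x^\top \chi^{K} \leq 1$, and combined with $x \geq 0$ this gives $x \in \qstab(G)$.

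Next I would write down the formulation explicitly. Expressing $Q = \{y : \exists z,\ Ay + Bz \leq b,\ Cy + Dz = d\}$ with $A$ an $m \times n$ matrix, $B$ an $m \times r$ matrix, $C$ a $k \times n$ matrix, and $D$ a $k \times r$ matrix, Lemma~\ref{lem:polar} describes $\{x : x^\top y \leq 1 \ \forall y \in P\}$ as
$$\{ x : \exists\, \lambda \geq 0,\ \mu :\ A^\top \lambda + C^\top \mu = x,\ B^\top \lambda + D^\top \mu = 0,\ b^\top \lambda + d^\top \mu \leq 1 \}.$$
This contributes $m+k$ new variables ($\lambda$ and $\mu$), $n+r$ equations (one for each coordinate of $x$ and one for each $z$-variable), and $m+1$ non-box inequalities ($\lambda \geq 0$ together with the single scalar inequality). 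Adjoining the bounds $x \geq 0$ as box constraints then gives the promised extended formulation of $R$, and the whole construction runs in linear time in the encoding of $Q$ since it only transposes the coefficient matrices and appends the bounds. No serious obstacle arises beyond the bookkeeping; the only point to flag is the counting convention (consistent with the definitions of $\stab(G)$ and $\qstab(G)$ recalled in Section~\ref{sec:prelim}) under which the non-negativity constraints $x \geq 0$ are treated as box constraints and not counted among the $m+1$ inequalities.
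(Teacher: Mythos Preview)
Your proposal is correct and follows essentially the same approach as the paper: apply Lemma~\ref{lem:polar} to $Q$, add the non-negativity constraints, and verify the sandwich $\stab(G)\subseteq R\subseteq\qstab(G)$ using that cliques of $G$ are stable sets of $\bar G$ and vice versa. If anything, your version is more explicit---the paper simply asserts that the variable/inequality/equation count ``can be derived immediately from Lemma~\ref{lem:polar}'', whereas you spell it out, and your remark about the counting convention for the $x\geq 0$ constraints is a legitimate point that the paper leaves implicit.
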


\begin{proof}
We have $\stab(\bar{G})\subseteq \pi(Q) \subseteq \qstab(\bar{G})$, where $\pi$ is the projection on the original space. Let $\eta(Q)$ denote the extended formulation for $\{x: x\geq 0, x^Ty\leq 1 \;\forall\; y\in \pi(Q)\}$ obtained applying Lemma~\ref{lem:polar} to $Q$ and then adding non-negativity constraints. Writing down $\eta(Q)$ can be done in linear time in the encoding size of $Q$, and the number of variables, inequalities and equations of $\pi(Q)$ can be derived immediately from Lemma \ref{lem:polar}. We now prove that $\eta(Q)$ is an extended formulation for $(\stab(G),\qstab(G))$.
    Let $S$ be a stable set in $G$, and $\chi^S$ be the corresponding incidence vector. For any $y\in \pi(Q)$, we have $y^T\chi^S = y(S)\leq 1$ as $S$ is a clique in $\bar{G}$, hence since $\eta(Q)$ (hence $\pi(\eta(Q))$) is convex it follows that $\stab(G)\subseteq \pi(\eta(Q))$. Now, for a clique $C$ of $G$ and $x\in \pi(\eta(Q))$, one has $\chi^C\in \pi(Q)$ hence $x(C)=x^T\chi^C\leq 1$, proving $\pi(\eta(Q))\subseteq \qstab(G)$. 
\end{proof}

To conclude, we remark that the above statement produces an exact formulation of $\stab(G)$ whenever $G$ is a perfect graph.

\medskip

\noindent {\bf Alternative extended formulation for $(\stab(G),\qstab(G))$.} We now present a general algorithmic framework to obtain extended formulations of $(\stab(G),$ $\qstab(G))$ by iteratively decomposing a graph $G$. In particular this results in an algorithm that, given a graph $G$ on $n$ vertices, produces an explicit extended formulation for $(\stab(G),\qstab(G))$ of size $n^{O(\log n)}$, in time bounded by $n^{O(\log n)}$. The decomposition approach is inspired by  Yannakakis' protocol \cite{yannakakis1991expressing},  even though the formulation obtained has a different form than~\eqref{ef} (and different from the formulation obtained in the previous section). We first recall a simple, though important observation:

\begin{obs}\label{obs:decompose}
Let $P_1,\dots, P_k\in \R^n$ be polyhedra with $P=P_1\cap\dots\cap P_k$, and let $Q_i$ be an extended formulation for $P_i$ for $i=1,\dots,k$, i.e. $P_i=\{x\in \R^n: \;\exists \;y^{(i)}\in \R^{r_i} :(x,y^{(i)})\in Q_i\}$. Then $P=\{x\in \R^n: \mbox{ for } i=1,\dots,k \; \exists \; y^{(i)}\in \R^{r_i}: (x,y^{(i)})\in Q_i \}$.
\end{obs}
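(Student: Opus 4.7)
The plan is to prove both inclusions of the claimed set equality; both directions amount to unpacking the definitions, with the only conceptual point being that the auxiliary variables $y^{(i)}$ for different indices $i$ are independent and can be chosen separately without interference.

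For the forward inclusion, I would take $x \in P = P_1 \cap \cdots \cap P_k$. Then $x \in P_i$ for every $i$, and since $Q_i$ is an extended formulation of $P_i$, for each $i$ there exists some $y^{(i)} \in \R^{r_i}$ with $(x, y^{(i)}) \in Q_i$. Concatenating these $k$ witnesses (over disjoint sets of auxiliary coordinates) exhibits $x$ as a member of the right-hand side. For the reverse inclusion, if $x$ belongs to the right-hand side, then by definition for each $i=1,\dots,k$ there is a $y^{(i)} \in \R^{r_i}$ with $(x, y^{(i)}) \in Q_i$, and this single condition (for each fixed $i$) already certifies $x \in P_i$ via the extended formulation $Q_i$. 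Taking the intersection over $i$ yields $x \in P$, which closes the argument.

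There is no real obstacle here: the observation is essentially a tautology, and what makes it useful is purely operational, namely that an extended formulation for an intersection can be obtained by juxtaposing extended formulations of the factors using \emph{disjoint} copies of auxiliary variables. This is exactly the operation used implicitly in the Alice-labelled step of the proof of Theorem~\ref{thm:algdetgeneral}, and it is the fact that we rely on in the subsequent decomposition-based construction for $(\stab(G),\qstab(G))$. I would therefore keep the write-up short, emphasising only that the $y^{(i)}$'s belong to pairwise disjoint coordinate blocks so that quantifying over them independently for each $i$ does not alter membership in any individual $Q_i$.
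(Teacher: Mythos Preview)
Your argument is correct; the paper states this result as an Observation without proof, so there is nothing to compare against beyond the fact that the authors also treat it as self-evident. Your two-inclusion unpacking is exactly the intended reading, and your remark that the $y^{(i)}$ live in disjoint coordinate blocks is the only point worth making explicit.
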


A key tool is next Lemma. For a vertex $v$ of $G$, $N^+(v):=N(v)\cup \{v\}$ denotes the inclusive neighbourhood of $v$.

\begin{lem}\label{lem:stabdecompose}
Let $G$ be a graph on vertex set $V=\{v_1,\dots,v_n\}$, and, fix $k$ with $1\leq k \leq n$. Let $G_i$ be the induced subgraph of $G$ on vertex set $V_i=N^+(v_i)\setminus\{v_1,\dots,v_{i-1}\}$ for $i=1,\dots,k$, and $G_0$ the induced subgraph of $G$ on vertex set $V_0=\{v_{k+1},\dots, v_n\}$. Let $Q_i$ be an extended formulation for $(\stab(G_i), \qstab(G_i))$, for $i=0,\dots,k$, and
$P=\{x\in \R^V: x_{V_i} \in \pi_i(Q_i)\mbox{ for } i=0,\dots, k \}$, where $x_{V_i}$ denotes the restriction of $x$ to the coordinates in $V_i$, and $\pi_i$ is the projection from the space of $Q_i$ to $R^{V_i}$. Then $\stab(G)\subseteq P \subseteq \qstab(G)$. In particular, the formulation obtained by juxtaposing the descriptions of the $Q_i$'s as in Observation \ref{obs:decompose} is an extended formulation for $(\stab(G),\qstab(G))$. 
\end{lem}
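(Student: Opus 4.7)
The plan is to prove the two inclusions $\stab(G)\subseteq P$ and $P\subseteq \qstab(G)$ separately, relying on the fact that each $G_i$ is an induced subgraph of $G$ and that every clique of $G$ turns out to be fully contained in some $V_i$.

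First I would handle $\stab(G)\subseteq P$. It suffices (by convexity of $P$) to check that $\chi^S\in P$ for every stable set $S$ of $G$. For any $i\in\{0,1,\dots,k\}$, since $G_i$ is the induced subgraph of $G$ on $V_i$, the restriction $\chi^S|_{V_i}$ is the characteristic vector of $S\cap V_i$, which is a stable set of $G_i$. Hence $\chi^S|_{V_i}\in\stab(G_i)\subseteq \pi_i(Q_i)$, giving $\chi^S\in P$.

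The main content of the lemma is the second inclusion $P\subseteq \qstab(G)$, and the key combinatorial observation here is: every clique $C$ of $G$ is contained in $V_i$ for some $i\in\{0,1,\dots,k\}$. To see this, if $C\cap\{v_1,\dots,v_k\}=\emptyset$ then $C\subseteq V_0$ by definition. Otherwise, let $i$ be the smallest index with $v_i\in C$ and $i\le k$. Every vertex of $C$ is either equal to $v_i$ or adjacent to it (since $C$ is a clique), so $C\subseteq N^+(v_i)$; and by minimality of $i$, no $v_j$ with $j<i$ lies in $C$. Therefore $C\subseteq N^+(v_i)\setminus\{v_1,\dots,v_{i-1}\}=V_i$.

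Given this, take any $x\in P$ and any clique $C$ of $G$. Pick an $i$ with $C\subseteq V_i$; since $G_i$ is an induced subgraph, $C$ is a clique of $G_i$. By definition of $P$, we have $x_{V_i}\in\pi_i(Q_i)\subseteq\qstab(G_i)$, so $x_{V_i}(C)\le 1$; but $x_{V_i}(C)=x(C)$ because $C\subseteq V_i$. Also $x\ge 0$ is inherited from the $\qstab(G_i)$ constraints (every coordinate of $V$ is covered by at least one $V_i$, e.g.\ $v_j\in V_j$ for $j\le k$ and $v_j\in V_0$ for $j>k$). Hence $x\in\qstab(G)$. The extended-formulation statement then follows immediately from Observation \ref{obs:decompose} applied to $P_i:=\{x\in\R^V: x_{V_i}\in\pi_i(Q_i)\}$ (which is an extended formulation lifted trivially on the remaining coordinates), whose intersection is exactly $P$. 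No real obstacle is expected; the only subtle point is verifying the clique-containment claim, which follows from the careful definition of the $V_i$'s that mirrors Yannakakis' low-degree-elimination step in the protocol.
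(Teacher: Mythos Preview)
Your proof is correct and follows essentially the same approach as the paper: both argue $\stab(G)\subseteq P$ by restricting characteristic vectors of stable sets, and both prove $P\subseteq\qstab(G)$ via the key observation that every clique of $G$ lies entirely in some $V_i$ (taking the minimal index $i$ with $v_i\in C$, or $i=0$ if $C$ misses $\{v_1,\dots,v_k\}$). Your explicit check that $x\ge 0$ follows because every vertex is covered by some $V_i$ is a small point the paper leaves implicit, but otherwise the arguments coincide.
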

\begin{proof}
 For the first inclusion, let $\chi^S$ be the characteristic vector of a stable set $S$ of $G$. Then, for every $i=0,\dots,k$,  $S\cap V(G_i)$ is a stable set in $G_i$, hence $\chi^{S}_{V_i}\in \pi_i(Q_i)$. By convexity, we conclude that $\stab(G)\subseteq P$. For the second inclusion, let $x\in P$, and let $C$ be a clique of $G$.
    Notice that $C$ is a clique of $G_i$ for some $i$: indeed, if $\{v_1,\dots,v_k\}\cap C\neq \emptyset$, let $i$ be the minimum such that $v_i\in C$, then $C\subseteq N^+(v_i)\setminus \{v_1,\dots,v_{i-1}\}=V_i$ by definition; if $\{v_1,\dots,v_k\}\cap C= \emptyset$, then $C\subseteq V_0$. Since $\pi_i(Q_i)\subseteq \qstab(G_i)$, we have $x(C)=x_{V_i}(C)\leq 1$.
\end{proof}

 Given a graph $G$, a \emph{decomposition tree} $\tau_G$ is a rooted tree defined inductively as follows:
\begin{itemize}
    \item The root node of $\tau_G$ is associated to the graph $G$;
    \item Let $v$ be a node of $\tau_G$, to which graph $H$ is associated. Then $v$ is either a leaf, or it has a single child, to which graph $\bar H$ is associated, or it has children $\tau_{H_0},\dots,\tau_{H_k}$, for some $k\geq 1$, associated to graphs $H_0, H_1, \dots, H_k$, defined as in Lemma \ref{lem:stabdecompose} for some $v_1,\dots,v_k$, respectively.
\end{itemize}

We now state the main lemma of the section.

\begin{lem}\label{lem:rooted-tree-to-ef}
Let $G$ be a graph on $n$ vertices and let $\tau$ be a decomposition tree associated to $G$. Denote by ${\cal L}_\tau$ the set of leaves of $\tau$. Suppose that we are given, for each leaf $L\in {\cal L}_\tau$, an extended formulation $\eta(G_L)$ for $(\stab(G_L),\qstab(G_L))$, where $G_L$ is the graph associated to node $L$. Let $\sigma$ be an upper bound on the total encoding length of the description of any $\eta(G_L)$ (including the number of inequalities, variables and equations), and denote by $|\tau|$ the number of vertices of $\tau$.  Then we can construct an extended formulation for $(\stab(G),\qstab(G))$ of size $O( |\tau| \sigma)$ in time $O( |\tau| \sigma)$.
\end{lem}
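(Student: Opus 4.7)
The plan is to construct $\eta(G)$ by a bottom-up traversal of the decomposition tree $\tau$, producing at each visited node $v$ with associated graph $H_v$ an extended formulation $\eta(H_v)$ for $(\stab(H_v),\qstab(H_v))$; when the traversal reaches the root we recover the desired $\eta(G)$.

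The base case is covered by the hypothesis: for each leaf $L\in {\cal L}_\tau$ the formulation $\eta(G_L)$ of encoding length at most $\sigma$ is given. At an internal node $v$ the construction splits into two cases according to which of the three types of decomposition step is recorded at $v$ (the ``leaf'' option not applying to internal nodes). If $v$ has a single child $v'$ labelled with $\bar H_v$, I would apply Lemma~\ref{lem:complementpolar} to $\eta(H_{v'})$ to produce $\eta(H_v)$; correctness is immediate from that lemma. If instead $v$ has children $v_0,\dots,v_k$ labelled with graphs $H_0,\dots,H_k$ as in Lemma~\ref{lem:stabdecompose}, I would juxtapose the inductive formulations $\eta(H_0),\dots,\eta(H_k)$ in the sense of Observation~\ref{obs:decompose}; that this juxtaposition is an extended formulation of $(\stab(H_v),\qstab(H_v))$ is exactly the content of Lemma~\ref{lem:stabdecompose}.

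For the size and time bounds, I would argue by induction on $\tau$ that each node contributes at most $O(\sigma)$ new symbols to the final description. A decomposition step is literal concatenation and so adds no overhead beyond the sum of the children's sizes. A complement step adds, by the explicit recipe in Lemma~\ref{lem:polar}, exactly one new inequality together with a block of variables and equations whose number is linear in the child's description. Summing over the $O(|\tau|)$ nodes of $\tau$ yields the announced $O(|\tau|\sigma)$ bound on both the size (number of inequalities) and the construction time.

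The main obstacle I anticipate is precisely the bookkeeping at complement nodes. Read naively, Lemma~\ref{lem:polar} copies the coefficient matrices $A,B,C,D$ of the child formulation, so the encoding length could \emph{double} at every complement step and blow up exponentially along a chain of such steps. I would handle this in two moves: first, using $\bar{\bar H}=H$, preprocess $\tau$ in linear time so that no complement node has a complement child, eliminating all chains; second, represent $\eta(H_v)$ incrementally by storing only the $O(\sigma)$ pieces of glue introduced at $v$ and sharing by reference the coefficient blocks inherited from its children. After these two observations the inductive accounting above goes through and delivers the $O(|\tau|\sigma)$ size and time bound stated in the lemma.
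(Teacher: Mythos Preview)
Your proposal is correct and follows the paper's proof essentially verbatim: a bottom-up traversal of $\tau$, applying Lemma~\ref{lem:complementpolar} (via Lemma~\ref{lem:polar}) at single-child complement nodes and juxtaposing via Observation~\ref{obs:decompose} and Lemma~\ref{lem:stabdecompose} at decomposition nodes, with the size bound obtained by summing per-node contributions. You are actually more careful than the paper on one point: the paper simply asserts that each $\eta(H)$ has size linear in its children's total and iterates, whereas you flag the potential blow-up along a chain of complement nodes and propose collapsing $\bar{\bar H}=H$ as preprocessing---a clean fix for an issue the paper does not address (and which never arises in the applications, Theorems~\ref{thm:generalgraphs} and~\ref{thm:threshold}, where complement chains have length at most one). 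Note incidentally that even without your fix the \emph{number of inequalities} grows only additively (by roughly $|V(H)|+1$) at each complement step, so the stated $O(|\tau|\sigma)$ size bound holds regardless; it is only the encoding-length/time bound that benefits from your preprocessing.
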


\begin{proof} We will abuse notation and identify a node of the decomposition tree and the corresponding subgraph. Consider the extended formulation, which we call $\eta(G)$, obtained by traversing $\tau$ bottom-up and applying the following: 
\begin{enumerate}
    \item if a non-leaf node $H$ of $\tau$ has a single child $\bar{H}$, then define $\eta(H)$ to be equal to the extended formulation of $\{x\in \mathbb{R}^{V(H)}: x^Ty\leq 1 \;\forall\; y \in \eta(\bar{H})\}$, obtained by applying Lemma \ref{lem:polar}, plus constraints $x \geq 0$.
    \item otherwise if $H$ has children $H_0,\dots,H_k$, then define $\eta(H)$ to be the extended formulation obtained from $\eta(H_0),\dots, \eta(H_k)$ using Observation~\ref{obs:decompose}. 
\end{enumerate}  

We first prove that $\eta(G)$ is an extended formulation for $(\stab(G),\qstab(G))$. We proceed by induction on the height of $\tau$, in particular we prove that for a node $H$ of $\tau$, $\eta(H)$ is an extended formulation of $(\stab(H),\qstab(H))$, assuming this is true for the children of $H$. If $H$ is a leaf of $\tau$, then there is nothing to prove. Otherwise, we need to analyze two cases: \begin{enumerate}
    \item $H$ has a single child, labelled $\bar{H}$. Then, assuming that $\stab(\bar{H})\subseteq \pi(\eta(\bar{H})) \subseteq \qstab(\bar{H})$, we have that $\stab(H)\subseteq \pi(\eta(H))\subseteq \qstab(H)$ by Lemma \ref{lem:complementpolar}.
    \item $H$ has children $H_0,\dots,H_k$. Assume that for $i=0,\dots,k$, $\stab(H_i)\subseteq \eta(H_i)\subseteq \qstab(H_i)$. Then, applying Lemma \ref{lem:stabdecompose}, we have that $\stab(H)\subseteq \pi(\eta(H))\subseteq \qstab(H)$.
\end{enumerate}

We now deal with complexity issues. Thanks to Lemma \ref{lem:polar} and Observation \ref{obs:decompose}, the extended formulation $\eta(H)$ for a node $H$ of $\tau$ has linear size in the total size of the formulations associated to the children of $H$, and the same bound holds for the time needed to write down $\eta(H)$. Iterating this over all the nodes of $\tau$, from the leaves to the root, we obtain the desired bounds on the size of $\eta(G)$ and the time needed to write it down.
\end{proof}

Lemma~\ref{lem:rooted-tree-to-ef} allows us to deduce the following.

 \begin{thm}\label{thm:generalgraphs}
 Let $G$ be a graph on $n$ vertices. Then there is an algorithm that, on input $G$, outputs an extended formulation of $(\stab(G),\qstab(G))$ of size $n^{O(\log n)}$ in $n^{O(\log n)}$ time.
 \end{thm}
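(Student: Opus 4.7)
The plan is to instantiate Lemma~\ref{lem:rooted-tree-to-ef} with a decomposition tree $\tau_G$ that mimics Yannakakis' low-degree/high-degree split. At a node labelled by a graph $H$ the recursive construction proceeds as follows. If $|V(H)|\leq 2$ the node is a leaf, and I use as $\eta(G_L)$ the trivial constant-size description of $\stab(H)=\qstab(H)$. Otherwise let $L(H)$ denote the set of vertices of $H$ of degree at most $|V(H)|/2$. When $|L(H)|\geq |V(H)|/2$, I enumerate $L(H)=\{v_1,\dots,v_k\}$ and create children $H_0,H_1,\dots,H_k$ exactly as prescribed by Lemma~\ref{lem:stabdecompose}. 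When instead $|L(H)|<|V(H)|/2$, I add a single child labelled $\bar H$; in $\bar H$ strictly more than half of the vertices are low-degree, so the next step is automatically a decompose step.

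Correctness of the resulting $\eta(G)$ as an extended formulation of $(\stab(G),\qstab(G))$ is then immediate from Lemma~\ref{lem:rooted-tree-to-ef}. The key combinatorial estimate is that every decompose step shrinks all children: for $i\geq 1$ one has $|V(H_i)|\leq |N^+(v_i)|\leq |V(H)|/2+1$ because $v_i\in L(H)$, while $|V(H_0)|\leq |V(H)|-|L(H)|\leq |V(H)|/2$. Consequently any root-to-leaf path contains at most $\lceil\log_2 n\rceil+O(1)$ decompose nodes, each followed or preceded by at most one complement node, so the depth of $\tau_G$ is $O(\log n)$.

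A decompose node has branching at most $n+1$ while a complement node has branching $1$. Hence the number of leaves of $\tau_G$ is bounded by $(n+1)^{O(\log n)}=n^{O(\log n)}$, and the same bound holds for the number of internal nodes. Plugging $\sigma=O(1)$ (the encoding size of a leaf formulation) and $|\tau_G|=n^{O(\log n)}$ into Lemma~\ref{lem:rooted-tree-to-ef} yields both the size bound and the running-time bound claimed in the theorem, using Lemma~\ref{lem:polar}, Lemma~\ref{lem:complementpolar}, and Observation~\ref{obs:decompose} to produce each internal-node formulation from its children's formulations in time proportional to the encoding size.

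The main obstacle I anticipate is to verify that the depth is genuinely $O(\log n)$: this requires ruling out both runaway chains of complement nodes and decompose steps that fail to halve some child. The choice of threshold $|V(H)|/2$ for the low/high degree split, together with the fact that complementing the graph swaps low-degree and high-degree classes, is exactly what precludes these pathologies and forces strict halving after every decompose step. Once this is in place the counting argument and the application of Lemma~\ref{lem:rooted-tree-to-ef} are routine.
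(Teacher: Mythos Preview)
Your proposal is correct and follows essentially the same approach as the paper's proof: build the decomposition tree using the low-degree/high-degree split with threshold $|V(H)|/2$, pass to the complement when too few vertices are low-degree, observe that each decompose step halves the graph (so the depth is $O(\log n)$ and the tree has $n^{O(\log n)}$ nodes), and apply Lemma~\ref{lem:rooted-tree-to-ef} with constant-size leaf formulations. The only cosmetic differences are your explicit leaf threshold $|V(H)|\leq 2$ versus the paper's generic constant $c$, and your slightly more explicit bookkeeping on $|V(H_0)|$ and on the alternation between complement and decompose steps.
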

 
 \begin{proof} Define the decomposition tree ${\tau_G}$ iteratively as follows, starting from the root. Let $H$ be the current graph. If $H$ has less than $c$ vertices, where $c$ is any fixed constant, then $H$ has no child. Otherwise, let $v_1,\dots, v_k$ be the vertices of $H$ of degree at most $|V(H)|/2$, if $k\geq |V(H)|/2$, then $H$ has children labelled ${H_0},\dots, {H_k}$, where $H_0, \dots, H_k$ are the graphs defined as in Lemma \ref{lem:stabdecompose}.  Notice that $H_0,\dots, H_k$ have at most $|V(H)|/2+1$ vertices. Otherwise, $H$ has a single child labelled ${\bar{H}}$. Now it is easy to see that $\tau_G$ has $n^{O(\log n)}$ nodes. Indeed, notice that, for any $H$, at least half of the vertices of either $H$ or its complement $\bar{H}$ will have low degree, hence the corresponding node will be branched and the size of the graphs associated to its children will reduce by roughly half: this implies that the height of $\tau_G$ is $O(\log n)$, and the fact that a node of $\tau_G$ can have at most $n$ children implies that the total number of vertices is $n^{O(\log n)}$. $\tau_G$ is a decomposition tree for $G$, and we can associate to each of its leaves labelled by a (constant size) subgraph $G_L$ the clique formulation $QSTAB(G_L)$, which has constant size. Applying Lemma~\ref{lem:rooted-tree-to-ef} concludes the proof.
 \end{proof}
 
  Theorem \ref{thm:generalgraphs} may have interesting computational consequences: indeed, there is interest in producing relaxations of $\stab(G)$ without explicitly computing the Theta body (see for instance \cite{giandomenico2015ellipsoidal}). We conclude by remarking that, as a consequence of the main result of~\cite{goos2015lower}, there exists a constant $c>0$ such that, there is no extended formulation of size $O(n^{\log^c n})$ for $(\stab(G),\qstab(G))$: this limits the extent to which Theorem~\ref{thm:generalgraphs} could be improved for general (non-perfect) graphs.

\medskip

\noindent {\bf Claw-free graphs and generalizations.} Let $P=\stab(G)$, where $G$ is claw-free, i.e., it does not contain the complete bipartite graph $K_{1,3}$ as induced subgraph. Extended formulations for this polytope are known~\cite{faenza2012separating}, but as the stable set polytope of claw-free graphs generalizes the matching polytope, it follows from~\cite{rothvoss2017matching} that it has no extended formulation of polynomial size. The row submatrix of the slack matrix of $P$ corresponding to clique constraints can however be computed by the following simple protocol from~\cite{faenza2015extended}. Fix an arbitrary order $v_1,\dots,v_k$ of $V$. Alice, who has a clique $C$ as input, sends vertex $v_i\in C$ with smallest $i$ to Bob, who has a stable set $S$. If $v \in S$, Bob outputs $0$. Else, since $G$ is claw-free, we have $|N(v)\cap S|\leq 2$, and clearly $C\subset N(v)$, hence Bob sends $N(v)\cap S$ and Alice can compute and output $1-|C\cap S|$. The protocol has complexity at most $3\log n +1$, hence by applying Theorem \ref{thm:yan} we get the following formulation of size $O(n^3)$:
 \begin{align}\label{ef:stabclawfree}
x(C)+\sum_{R\in \calR_C} y_R=1 & \;\;\;\; \forall \;C \mbox{ clique of } G\\
x, y\geq 0&\nonumber
\end{align}
where, following the notation from~\eqref{ef}, $\calR^1$ contains a $1$-rectangle for each couple $(v,U)$, where $v\in V$ and $U\subseteq N(v)$ with $U$ stable (i.e., $|U|\leq 2$), and, for a clique $C$, $\calR_C$ denotes all rectangles $(v_i,U)$ from ${\calR^1}$ with $i = \arg\min\{j \in [n] : v_j \in C\}$ and $C\cap U=\emptyset$. 

We now remove from~\eqref{ef:stabclawfree} many redundant equations. Before, we notice that the above protocol can be easily generalized to $K_{1,t}$-free graphs for $t\geq 3$: in this case sets $\calR^1, \calR_C$ are defined similarly as before, except that now we have rectangles $(v,U)$ with $|U|\leq t-1$. This yields a formulation of size $O(n^t)$. We state our result for this more general class of graphs: informally, the only clique equations that we keep are coming from singletons and edges, obtaining a formulation with only $O(n^2)$ many equations.

\begin{thm}\label{thm:clawfree}
Let $G(V,E)$ be a  $K_{1,t}$-free graph. Let ${\calR^1}, \calR_C$ as above. Then the following is an extended formulation for $(\stab(G),\qstab(G))$:
\begin{align}\label{ef:clawfree}
x(v)+\sum_{R\in \calR_v} y_R=1 & \;\;\;\; \forall \;v \in V\\
x(e)+\sum_{R\in \calR_e} y_R=1 & \;\;\;\; \forall \;e \in E\nonumber\\
x,y\geq 0,&\nonumber
\end{align} where we abbreviated $\calR_v = \calR_{\{v\}}$. In particular, if $G$ is also perfect,~\eqref{ef:clawfree} is a formulation for $\stab(G)$.
\end{thm}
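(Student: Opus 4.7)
The plan is to show that the compact system~\eqref{ef:clawfree} defines, in the $(x,y)$-space, the same polyhedron as the ``full'' clique system~\eqref{ef:stabclawfree}, from which the theorem follows immediately: \eqref{ef:stabclawfree} is already an extended formulation of $(\stab(G),\qstab(G))$ (the protocol recalled just before the statement computes the clique-indexed part of the slack matrix in complexity $O(t\log n)$, so Theorem~\ref{thm:yan} applies), and the perfect case then reduces via Theorem~\ref{thm:stabperfect}. Since~\eqref{ef:clawfree} is obtained from~\eqref{ef:stabclawfree} by dropping all clique equations except those for singletons and edges, one inclusion is immediate; what I have to do is derive, from the equations in~\eqref{ef:clawfree} alone, the clique equation $x(C)+\sum_{R\in\calR_C}y_R=1$ for every clique $C$ with $|C|\ge 3$.

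To do this I would fix such a $C=\{v_{i_1},\dots,v_{i_k}\}$ with $i_1<\dots<i_k$ and exploit the fact that $v_{i_1}$ is the minimum-index vertex of $C$ and of each edge $\{v_{i_1},v_{i_j}\}$, so that by the definition of the rectangles one has $\calR_C\subseteq\calR_{v_{i_1}}$ and $\calR_{v_{i_1}}\setminus\calR_{\{v_{i_1},v_{i_j}\}}=\{(v_{i_1},U)\in\calR_{v_{i_1}}:v_{i_j}\in U\}$ for every $j\ge 2$. Subtracting the edge equation for $\{v_{i_1},v_{i_j}\}$ from the vertex equation for $v_{i_1}$ then gives, for each $j\ge 2$,
\[
x(v_{i_j})\;=\;\sum_{\substack{(v_{i_1},U)\in\calR_{v_{i_1}}\\ v_{i_j}\in U}}y_{(v_{i_1},U)}.
\]

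Summing these identities over $j=2,\dots,k$ I would then invoke the key combinatorial observation: $C\setminus\{v_{i_1}\}$ is a clique while every $U$ appearing in $\calR_{v_{i_1}}$ is a stable set, so $|U\cap(C\setminus\{v_{i_1}\})|\le 1$. Hence the sets indexed by different values of $j$ are pairwise disjoint and their union equals exactly $\calR_{v_{i_1}}\setminus\calR_C$, giving $\sum_{j=2}^k x(v_{i_j})=\sum_{R\in\calR_{v_{i_1}}\setminus\calR_C}y_R$. Adding back the vertex equation for $v_{i_1}$ and rearranging yields the desired clique equation for $C$. I do not anticipate a serious obstacle beyond a careful bookkeeping of $\calR_C$, $\calR_v$, and $\calR_e$: the $K_{1,t}$-freeness plays no role in this reduction itself (it only enters in guaranteeing that $\calR^1$, and hence~\eqref{ef:clawfree}, has size $O(n^t)$), and the whole derivation is elementary linear algebra on top of the clique-vs-stable-set intersection bound.
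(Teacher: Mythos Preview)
Your proposal is correct and follows essentially the same approach as the paper: both derive the clique equation for $C$ as a linear combination of the vertex equation for the first vertex $v$ of $C$ and the edge equations for $\{v,u\}$, $u\in C\setminus\{v\}$, using the key observation that a stable $U\subseteq N(v)$ meets the clique $C$ in at most one vertex. The only difference is organizational: the paper first sums the $k-1$ edge equations and then subtracts $k-2$ copies of the vertex equation, whereas you first form the $k-1$ differences (vertex minus edge) to obtain $x(v_{i_j})=\sum_{U\ni v_{i_j}}y_{(v_{i_1},U)}$, sum these, and then combine with the vertex equation---the resulting linear combination and the use of $|U\cap C|\le 1$ are identical.
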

\begin{proof}
 Thanks to the above discussion, we only need to show that, for any clique $C$ of $G$ with $|C|=k\geq 3$, the equation $x(C)+\sum_{R\in \calR_C} y_R=1$ is implied by the equations in~\eqref{ef:clawfree}. In fact, since equations from~\eqref{ef:clawfree} are valid for~\eqref{ef:stabclawfree} (they are a subset of valid equations), it will be enough to prove that the \emph{left-hand side} of any equation from~\eqref{ef:stabclawfree} is a linear combination of the left-hand sides of equations from~\eqref{ef:clawfree}. From now on, fix such $C$, let $v\in C$ be the first vertex of $C$ (in the order fixed by the protocol) and consider the following expression, obtained by summing the left-hand side of the equations relative to $e=uv$, for every $u\in C-v$:
$$
\sum_{\substack{e=uv:\\ u\in C-v}} \left(x(u)+x(v)+\sum_{R\in \calR_e} y_R\right)=
$$
$$
(k-2)x(v)+x(C)+ \sum_{\substack{e=uv:\\ u\in C-v}} \left(\sum_{R\in \calR_e\cap\calR_C} y_R+\sum_{R\in \calR_e\setminus\calR_C} y_R\right)
$$
Now, consider a rectangle $R=(v,U)\in \calR^1$, and let $(C,S)$ be an entry of $R$ (hence, $C\cap S=\emptyset$). Then $v$ is the first vertex of $C$ and $U= N(v)\cap S$, with $U\cap C=\emptyset$. Hence, we can derive $\calR_e=\{(v,U): U\subset N(v), U \in\calS, u\not\in U\}$ for $u \in C$ and $e=uv$, and  $\calR_C=\{(v,U): U\subset N(v), U\in\calS,  U\cap C=\emptyset\}$, where $\calS$ denotes the family of the stable sets of $G$. Hence $\calR_C\subseteq \calR_e$ for $e\subset C$. We can then rewrite the above expression as:
 \begin{align}\label{clawfreeeq}
 (k-2)x(v)+x(C)+ (k-1) \sum_{\substack{U\subseteq N(v),\\ U\in\calS,  U\cap C=\emptyset}} y_{(v, U)}+\sum_{u\in C\setminus\{v\}} \sum_{\substack{U\subseteq N(v)-u\\ U\in\calS, U\cap C\neq \emptyset }} y_{{(v, U)}}= \nonumber\\
 (k-2)x(v)+x(C)+ (k-1) \sum_{\substack{ U\subseteq N(v),\\ U\in\calS,  U\cap C=\emptyset}} y_{(v, U)} +(k-2) \sum_{\substack{U\subseteq N(v)\\ U\in\calS, U\cap C\neq \emptyset }} y_{(v, U)},
\end{align} {where we used that $U\in\calS, U\cap C\neq \emptyset$ implies that $|U\cap C| = 1$, hence $y_{(v, U)}$ will appear in all summations, except the one corresponding to $\{u\} = U\cap C$.}

Now, consider the {left-hand} side of the equation corresponding to ${C=\{v\}}$: 
$$
x(v)+\sum_{R\in \calR_v} y_R=x(v)+\sum_{U\subset N(v),U\in \calS} y_{{(v,U)}}. $$
Subtracting $k-2$ times the latter from \eqref{clawfreeeq} we obtain as required $$x(C)+\sum_{U\subset N(v),\\ U\in\calS,  U\cap C=\emptyset}y_{v,U} = x(C) + \sum_{R\in \calR_C} y_R.$$\end{proof}

\noindent {\bf Comparability graphs. }
A graph $G$ on vertex set $D$ is a \emph{comparability} graph if there is a partial order $(D,\leq_D)$ such that two elements of $D$ are adjacent in $G$ if and only if they are comparable with respect to $\leq_D$. A clique (resp. stable set) in $G$ corresponds to a chain (resp. antichain) in $(D,\leq_D)$. It is well known that comparability graphs are perfect. In \cite{yannakakis1991expressing}, it is described an unambiguous nondeterministic protocol for the slack matrix of $\stab(G)$, which we now recall. Given a clique $C=\{v_1,\dots,v_k\}$ with $v_1\leq\dots\leq v_k$ in $D$, and a stable set $S$ disjoint from $C$, there are three cases: 1) every node of $C$ precedes some node of $S$ (equivalently, $v_k$ does); 2) no node of $C$ precedes a node of $S$ (equivalently, $v_1$ does not precede any node of $S$); 3) there is an $i$ such that $v_i$ precedes some node of $i$, and $v_{i+1}$ does not.
Alice, given $C$, guesses which of the three cases applies and sends to Bob the certificate {$(v_k, 0)$} in case 1), {$(v_1, 1)$} in case 2) and $(v_i,v_j)$ in case 3).  This protocol yields a factorization of the slack matrix, hence an extended formulation for $\stab(G)$ of the usual kind:
\begin{align}\label{ef:comp}
x(C)+ y(v_1,{1})+y(v_1,v_2)+\dots +y(v_k,{0})=1 & \;\;\;\; \forall \;C=\{v_1,\dots,v_k\} \in G\\ 
x,y\geq 0&\nonumber
\end{align}

\begin{lem}
Let $G(V,E)$ be a comparability graph with order $\leq_D$, then the following is an extended formulation for $\stab(G)$:
\begin{align}\label{ef:compsmall}
x(v)+y(v,{0})+y(v,{1})=1  &\;\;\;\; \forall \; v \in V\\
x(u)+x(v)+y(u,{1})+y(u,v)+y(v,{0})=1  &\;\;\;\; \forall \; u,v\in V: u\leq_D v\nonumber\\
x,y\geq 0&\nonumber.
\end{align}
\end{lem}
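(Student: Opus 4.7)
The strategy is to show that the systems \eqref{ef:comp} and \eqref{ef:compsmall} define the same polytope in the joint $(x,y)$-space; since \eqref{ef:comp} is already an extended formulation of $\stab(G)$ via the unambiguous protocol recalled just above, the lemma will follow by projecting out $y$. The two systems share the same $y$-variables---namely $y(v,0)$, $y(v,1)$ for every $v \in V$, and $y(u,v)$ for every comparable pair $u \leq_D v$---as well as the same nonnegativity constraints. They differ only in their equations: \eqref{ef:compsmall} keeps precisely the clique equations of \eqref{ef:comp} corresponding to cliques of size $1$ (vertices) and size $2$ (edges of $G$).

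One direction of inclusion is immediate: every equation of \eqref{ef:compsmall} literally appears in \eqref{ef:comp}, so every solution of \eqref{ef:comp} is a solution of \eqref{ef:compsmall}. For the reverse direction, I would fix a clique $C = \{v_1, \dots, v_k\}$ of $G$ with $v_1 \leq_D \cdots \leq_D v_k$ and $k \geq 3$, and exhibit its clique equation in~\eqref{ef:comp} as a linear combination of equations in~\eqref{ef:compsmall}. The natural candidate is to sum the $k-1$ edge equations for the consecutive pairs $(v_i,v_{i+1})$ and subtract the $k-2$ vertex equations for the interior vertices $v_2,\dots,v_{k-1}$. A direct check shows that the $x$-coefficients telescope to $x(v_1)+x(v_2)+\cdots+x(v_k)$ with coefficient $1$ each; the variables $y(v_i,0)$ and $y(v_i,1)$ at interior indices $2 \leq i \leq k-1$ cancel against the subtracted vertex equations, leaving precisely $y(v_1,1)$ and $y(v_k,0)$; the variables $y(v_i,v_{i+1})$ for $i=1,\dots,k-1$ survive untouched; and the right-hand side becomes $(k-1)-(k-2)=1$. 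This is exactly the clique equation for $C$ in \eqref{ef:comp}.

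The argument is essentially a telescoping identity, so the main (and only) work is the bookkeeping of coefficients in this linear combination---no real obstacle once one guesses the correct combination. Once the inclusion of the two polytopes in both directions has been established in the $(x,y)$-space, their projections onto the $x$-coordinates coincide, and since \eqref{ef:comp} projects onto $\stab(G)$, so does \eqref{ef:compsmall}, completing the proof.
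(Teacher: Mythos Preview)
Your proof is correct and rests on the same telescoping identity as the paper's argument. The only difference is in framing: you show that each clique equation of \eqref{ef:comp} is a linear combination of the vertex and consecutive-edge equations in \eqref{ef:compsmall}, so the two systems coincide in the extended $(x,y)$-space; the paper instead directly verifies $x(C)\le 1$ for every clique by the same manipulation and concludes via $\qstab(G)=\stab(G)$. Your version yields the slightly stronger statement that the two extended polytopes are equal, not merely that their projections agree.
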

\begin{proof}
{Constraints~\eqref{ef:compsmall} are a subset of~\eqref{ef:comp}, hence their projection $T$ to the space of $x$ variables satisfy $\stab(G)\subseteq T$}. Let $(x,y)$ be a point that satisfies \eqref{ef:compsmall}, and $C=\{v_1,\dots,v_k\}$ a clique of $G$ with $v_1\leq_D\dots\leq_D v_k$, $k\geq 3$. {We show $x(C)\leq 1$.} Manipulating the equations from \eqref{ef:compsmall}, we have that for $i=2,\dots, k$, $x(v_i)=y(v_{i-1},0)-y(v_{i-1},v_i) -y(v_i,{0})$. Hence:
$$
x(C)=
$$
$$
x(v_1)+y(v_{1},0)-y(v_{1},v_2) -y(v_2,0)+\dots+y(v_{k-1},0)-y(v_{k-1},v_k) -y(v_k,0)
$$
$$
=x(v_1)+y(v_{1},0)-y(v_{1},v_2)-\dots -y(v_{k-1},v_k) -y(v_k,0)\leq 1,
$$ as required.
\end{proof}

We remark that a small, explicit extended formulation for the stable set polytopes of comparability graphs has been given in \cite{lovasz1994stable}.

\medskip

\noindent{\bf Threshold-free graphs.}
A threshold graph is a graph for which there is an ordering of the vertices $v_1,\dots, v_n$, such that for each $i$ $v_i$ is either complete or anticomplete to $v_{i+1},\dots,v_n$. Fix a threshold graph $H$ on $t$ vertices, where $t$ is a constant. We say that a graph is $H$-free if it does not contain $H$ as an induced subgraph. A deterministic protocol for the clique-stable set incidence matrix of $H$-free graphs is known~\cite{lagoutte}. It implies a polynomial size extended formulation for {$(\stab(G),QSTAB(G))$} if $G$ is $H$-free. One can apply Theorem \ref{thm:algdetgeneral} to show that such formulation can be obtained efficiently. One can also obtain this result directly {by applying Lemma~\ref{lem:rooted-tree-to-ef}.}

\begin{thm}\label{thm:threshold}
Let $H$ be a fixed threshold graph, and let $G$ be an $H$-free graph on $n$ vertices. Then there is an algorithm that, on input $G$, outputs an extended formulation for $(\stab(G),\qstab(G))$ of size $O(n^{|V(H)|})$ in time $O(n^{|V(H)|})$. If moreover $G$ is also perfect, then the output is an extended formulation of $\stab(G)$.
\end{thm}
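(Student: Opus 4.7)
The plan is to apply Lemma~\ref{lem:rooted-tree-to-ef}: it suffices to build, in time $O(n^{|V(H)|})$, a decomposition tree $\tau_G$ of total size $O(n^{|V(H)|})$ whose leaves correspond to graphs of constant size (so the trivial $\qstab$ formulation at each leaf has constant size). The construction proceeds by induction on $t := |V(H)|$. For the base case $t=1$, $G$ is $K_1$-free, hence empty, and $\tau_G$ is a single leaf.

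For the inductive step, let $h$ be the first vertex of $H$ in a threshold ordering. Since threshold graphs are closed under complementation, by prepending a single-child complementation node to the tree (replacing $(G,H)$ by $(\bar G,\bar H)$ if needed) we may assume $h$ is universal in $H$. Set $H' := H - h$, a threshold graph on $t-1$ vertices. The key observation is that for every $v \in V(G)$, the induced subgraph $G[N(v)]$ is $H'$-free: otherwise, $v$ together with an induced copy of $H'$ in $N(v)$ would span an induced copy of $H$ in $G$, contradicting $H$-freeness.

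Now we expand the root of $\tau_G$. Apply Lemma~\ref{lem:stabdecompose} to $G$ with $v_1,\dots,v_n$ an arbitrary enumeration of $V(G)$ and $k=n$. This yields children $G_0 = \emptyset$ (a leaf) and $G_1,\dots,G_n$, where each $G_i = G[N^+(v_i) \setminus \{v_1,\dots,v_{i-1}\}]$ contains $v_i$ as a universal vertex, and where $G_i - v_i$ is $H'$-free (being an induced subgraph of $G[N(v_i)]$). To reach the $H'$-free graph $G_i - v_i$ within the formalism of decomposition trees, we insert below $G_i$ a chain of three nodes: (a) a single complement child $\bar{G_i}$, in which $v_i$ is isolated; (b) a decomposition of $\bar{G_i}$ via Lemma~\ref{lem:stabdecompose} with $k=1$ and $v_1 = v_i$, splitting off the leaf $\bar{G_i}[\{v_i\}]$ and leaving the subgraph $\bar{G_i} - v_i$; (c) another single complement child, sending $\bar{G_i} - v_i$ back to $G_i - v_i$. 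We then recurse on $(G_i - v_i, H')$.

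Each recursive step decreases $t$ by one, with branching factor $O(n)$ and constant overhead per child, so unrolling the recursion $T(n,t) = O(n) + n \cdot T(n-1,t-1)$ gives $|\tau_G| = O(n^t) = O(n^{|V(H)|})$. Every leaf is either empty or a single vertex, so carries a $\qstab$ formulation of constant size. Lemma~\ref{lem:rooted-tree-to-ef} then delivers the claimed extended formulation of size $O(n^{|V(H)|})$ in time $O(n^{|V(H)|})$, and the final sentence of the theorem is immediate from Theorem~\ref{thm:stabperfect}. The main conceptual hurdle is that $G_i$ itself is only $H$-free, not $H'$-free, so one cannot recurse on it directly; the complement-decompose-complement triple is precisely what peels off the universal vertex $v_i$ and exposes the truly $H'$-free graph $G_i - v_i$ on which the induction can advance.
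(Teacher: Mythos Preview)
Your proof is correct and uses the same overall framework as the paper: build a decomposition tree guided by the threshold ordering of $H$ and invoke Lemma~\ref{lem:rooted-tree-to-ef}. The one substantive difference is the recursion target. The paper branches $G$ into $G_1,\dots,G_n$ and then recurses \emph{directly} on each $G_i$ (taking a complement first if the next threshold vertex is anticomplete), arguing afterwards that any root-to-leaf path of length $t-1$ yields distinct pivots $v_1,\dots,v_t$ that induce a copy of $H$. You instead insert the complement--decompose--complement chain to strip off the universal vertex $v_i$ and recurse on the genuinely $H'$-free graph $G_i - v_i$.

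What this buys: your induction on $|V(H)|$ is self-contained and transparent, and it guarantees strict size decrease at every level. The paper's more direct recursion avoids the three extra nodes per branch, but its termination argument is delicate precisely because $v_i$ survives in $G_i$ as a universal vertex; in particular, if at the next level one branches on $v_i$ again, the child equals $G_i$ itself and no progress is made, so one must argue that the relevant paths can always be realised with distinct pivots. Your peeling step sidesteps that issue entirely, at the cost of a harmless constant-factor blowup in the tree. Both constructions fit within the $O(n^{|V(H)|})$ bound (yours in fact gives $O(n^{|V(H)|-1})$ nodes).
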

\begin{proof}
We will use Lemma~\ref{lem:rooted-tree-to-ef}, hence we will describe a decomposition tree $\tau_G$ of size $O(n^{|V(H)|})$, whose leaves are singletons.
Let $V(H)=\{u_1,\dots,u_t\}$ with $u_i$ either complete or anticomplete to $u_{i+1},\dots,u_t$, for $i=1,\dots,t-1$.
The root of $\tau_G$ is $G$ (as before we identify a node of $\tau_G$ with a graph). If $u_1$ is complete to the other vertices of $H$, then $G$ has $n$ children $G_1,\dots,G_n$. Otherwise, $G$ has one child $\bar{G}$, which in turn has $n$ children $\bar{G}_1,\dots,\bar{G}_n$.
We now recurse on the children obtained (except for those which are already labelled by singletons, which are left as leaves): each $G_i$ (or $\bar{G}_i$) has either $|V(G_i)|$ children if $u_2$ is complete to $u_3,\dots,u_t$, or it has one child $\bar{G_i}$ which in turn has $|V(G_i)|$ children, and so on for $u_3,\dots,u_{t-1}
$. 
Our tree has clearly size $O(n^t)$. We now show that all its leaves are (labelled by) singletons, which concludes the proof. Let $G'$ be a leaf and consider the sequence $G^{(0)}=G,G^{(1)},\dots, G^{(k)}=G'$ of graphs that represent labels in the path from the root of $\tau_G$ to $G'$, excluding graphs that are complement of their predecessor. If $k\leq t-2$, then the decomposition must have stopped at $G'$ because $G'$ is a singleton. Assume by contradiction that $k=t-1$, and let $v_1,\dots, v_{t-1}$ be the vertices of $G$ such that $G^{(i)}=G^{(i-1)}_{v_i}$ or $G^{(i)}=\bar{G}^{(i-1)}_{v_i}$ (according to $u_i$) for $i=1,\dots,t-1$, and let $v_t$ be a vertex of $G^{(t)}$. We have that for $i=2,\dots, t$, $v_i\in N(v_{i-1})$ or $v_i\in V(G)\setminus N(v_{i-1})$, again according to $u_{i}$, but this implies that $v_1,\dots, v_{t}$ induce a subgraph of $G$ isomorphic to $H$, a contradiction.
\end{proof}

\section{Extended formulations from non-deterministic, unambiguous protocols} {As mentioned in the introduction, extended formulations can be obtained also via \emph{unambiguous non-deterministic} and \emph{randomized} communication protocols. The latter are very general, and it is not clear how to extend Theorem~\ref{thm:algdetgeneral} to deal with them. In this section, we deal therefore with the former.}

Let $P\subseteq Q$ be a pair of polytopes and $M$ the slack matrix of $(P,Q)$. Assume that we are given a partition of $M$ in monochromatic rectangles $\calR=\{R_1,\dots,R_t\}$ (similarly as in Theorem \ref{thm:algdetgeneral} this assumption could be relaxed). This partition corresponds to a non-deterministic unambiguous protocol for $M$ of complexity $\log(t) + O(1)$. On one hand, this implies a non-negative factorization of $M$, hence an extended formulation of $(P,Q)$, of size $O(t)$, but it is not clear at all how to get this formulation in general. On the other hand, Yannakakis~\cite{yannakakis1991expressing} showed how to deduce from $\calR$ a deterministic protocol for $M$ of complexity $O(\log^2 t)$, hence an extended formulation for $P$ of size $t^{O(\log t)}$. Unfortunately, Theorem~\ref{thm:algdetgeneral} cannot directly be applied to produce this formulation.  However, a formulation of size $t^{O(\log t)}$ can be obtained in time $t^{O(\log t)}$ under some basic additional assumptions. 

Let us begin by describing a version of Yannakakis' reduction mentioned above that can be found in~\cite{raobook}. In this setting, Alice has as input a row index $r$ of $M$, Bob a column index $c$, and the goal of the protocol is to determine the unique rectangle of $R_{r,c} \in \calR$ containing the entry $(r,c)$. We say that a rectangle $R$ is \emph{horizontally good} if $r$ is a row of $R$, and $R$  intersects horizontally at most half of the rectangles in $\calR$, \emph{vertically good} if $c$ is a column of $R$ and $R$ intersects vertically at most half of the rectangles in $\calR$, and \emph{good} if it is horizontally or vertically good. Notice that, since two rectangles in $\calR$ cannot intersect both vertically and horizontally, $R_{r,c}$ is good. We now proceed in stages: in each stage, Alice, which has input $r$, sends a horizontally good rectangle $R$, or the information that there is none: in the former case, either $c$ is a column of $R$, in which case $R=R_{r,c}$ and the protocol ends, or Alice and Bob can delete from $\calR$ all the rectangles that do not horizontally intersect $R$, as they cannot be $R_{r,c}$; in the latter case, there must be a vertically good rectangle $R$, which Bob sends to Alice, and again either $R=R_{r,c}$ or Alice and Bob can delete from $\calR$ all the rectangles that do not vertically intersect $R$. At the end of the stage, the size of $\calR$ is decreased by at least half. Note that $R_{r,c}$ is never deleted from ${\cal R}$ during the protocol, hence it will eventually be sent by Alice or Bob. Hence, there will be at most $\lceil\log t \rceil$ stages before $R_{r,c}$ is sent, and the protocol has complexity $O(\log^2 t)$. Therefore, the protocol partitions $M$ in $t^{O(\log t)}$ monochromatic rectangles, each of which is contained in a unique rectangle of $\calR$, which is the one that is output at the end of the protocol.

We now show how to obtain a tree modeling this protocol, and how to construct an extended formulation for $P,Q$ from the tree. We assume that we are given two graphs $G_H, G_V$ on vertex set $\calR$, where two rectangles are adjacent in $G_H$ if they intersect horizontally, and in $G_V$ if they intersect vertically.

\begin{thm}
Let $P,Q,M,\calR$ as above. Assume that we are given the graphs $G_H, G_V$, and an extended formulation $T_R$ of the pair $(P_R,Q_R)$ (defined exactly as in Theorem \ref{thm:algdetgeneral}), for each $R\in\calR$. 
Let $\sigma$ be an upper bound on the size of any $T_R$, and $\sigma_+$ on its total encoding length. Then there is an algorithm that, on input $\{T_R: R\in \calR\}$ and $\calR$, outputs an extended formulation for $(P,Q)$ of size $t^{O(\log (t))}$poly$(\sigma)$ in time $t^{O(\log (t))}$poly$(\sigma_+)$.
\end{thm}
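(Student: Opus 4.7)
The plan is to build an explicit binary protocol tree $\tau$ implementing the Yannakakis-style deterministic reduction recalled just above the statement, and then invoke Theorem~\ref{thm:algdetgeneral} with $\tau$ and formulations of the form $T_R$ at its leaves. Since the reduction has complexity $O(\log^2 t)$, $\tau$ will have at most $t^{O(\log t)}$ leaves, which matches the target bound.

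The key observation is that the entire reduction can be driven purely from the intersection data encoded in $G_H$ and $G_V$, without explicit row/column lists for the rectangles in $\calR$. To build $\tau$, simulate the deterministic protocol top-down, maintaining at each node the subset $\calR'\subseteq\calR$ of rectangles still compatible with the communication so far. A rectangle $R\in\calR'$ is horizontally good exactly when its degree in $G_H[\calR']$ is at most $|\calR'|/2$ (symmetrically for vertical goodness in $G_V$), and the update after Alice sends a rectangle $R$ is obtained by restricting $\calR'$ to the closed horizontal neighbourhood of $R$ in $G_H$. Each stage of the protocol is binarized into a path of length $\lceil\log_2(t+1)\rceil$ of nodes labelled by the sender (encoding which rectangle is sent, or the symbol ``none''), followed by a single-bit acknowledgement labelled by the receiver that either terminates the protocol at a leaf (tagged with the sent rectangle as output) or shrinks $\calR'$ and opens the next stage. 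Since every stage halves $|\calR'|$, there are at most $\lceil\log_2 t\rceil$ stages; hence $\tau$ has depth $O(\log^2 t)$ and at most $t^{O(\log t)}$ leaves, and can be constructed in $t^{O(\log t)}\cdot\mathrm{poly}(t)$ time.

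Each leaf $v$ of $\tau$ is tagged with an output rectangle $R_v\in\calR$, and its associated sub-rectangle $S_v$ (in the notation of Theorem~\ref{thm:algdetgeneral}) is contained in $R_v$, using a subset of its rows and columns. Hence $P_{S_v}\subseteq P_{R_v}$ and $Q_{R_v}\subseteq Q_{S_v}$, so $T_{R_v}$ satisfies $P_{S_v}\subseteq P_{R_v}\subseteq\pi(T_{R_v})\subseteq Q_{R_v}\subseteq Q_{S_v}$ and is a valid extended formulation of the pair $(P_{S_v},Q_{S_v})$. Feeding $\tau$, its labelling, and the assignment $v\mapsto T_{R_v}$ into Theorem~\ref{thm:algdetgeneral} therefore produces an extended formulation of $(P,Q)$ of size $t^{O(\log t)}\cdot\mathrm{poly}(\sigma)$, constructible in time $t^{O(\log t)}\cdot\mathrm{poly}(\sigma_+)$. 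The main conceptual difficulty is to verify that Yannakakis' reduction can be carried out knowing only $G_H$ and $G_V$, without explicit access to the rows and columns of the rectangles; once this is granted, the combination step is a direct application of the previously developed machinery.
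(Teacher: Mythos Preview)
Your high-level plan coincides with the paper's: build a tree encoding Yannakakis' deterministic simulation using only $G_H$ and $G_V$, tag each leaf with a rectangle of $\calR$, attach the given $T_{R_v}$ there, and combine bottom-up via intersections and convex hulls. However, the paper explicitly remarks (just before the theorem) that Theorem~\ref{thm:algdetgeneral} \emph{cannot} be invoked as a black box here, and this is precisely where your argument has a gap.

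The difficulty is that the tree you build from $G_H,G_V$ alone models the protocol for \emph{every} matrix whose rectangle-intersection pattern agrees with $G_H,G_V$, not just for $M$. Hence $\tau$ will in general contain branches that no input $(r,c)$ on $M$ ever traverses; e.g., if at some stage the horizontally good rectangles happen to cover every row of $M$, then Alice never sends ``none'', yet you cannot detect this from $G_H,G_V$. The preliminaries on which Theorem~\ref{thm:algdetgeneral} rests assume $S_v\neq\emptyset$ at every node, and its proof uses this: at an Alice node $v$ it asserts $P_v=P_{v_0}=P_{v_1}$, which fails when one child is dead. In that case there is no guarantee that $\pi(T_{v_0})\cap\pi(T_{v_1})$ still contains $P_v$, since $T_{v_1}$ has been assembled from formulations $T_R$ for rectangles $R$ whose column sets need not cover the columns of $S_v$. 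The symmetric failure occurs at Bob nodes with dead children, where the convex hull can escape $Q_v$. Your inclusion $S_v\subseteq R_v$ is correct at every leaf (trivially so at dead ones, where $S_v=\emptyset$), but that does not rescue the inductive step of Theorem~\ref{thm:algdetgeneral} at internal nodes.

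The paper fixes this not by pruning dead branches (which would require knowing $M$) but by re-proving correctness of the same bottom-up construction directly: a two-coloring argument (``red'' marks nodes where a putative bad vertex $x^*$ fails to lie in $\pi(T_v)$; ``blue'' marks nodes whose column set $B_v$ contains the column $c^*$ of $x^*$) forces a red-and-blue root-to-leaf path, and the leaf reached is then necessarily live, yielding a contradiction with the hypothesis on $T_R$. This replaces the black-box call to Theorem~\ref{thm:algdetgeneral} that your proposal relies on.
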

\begin{proof}
In Section \ref{sec:prelim} we formally defined a protocol as a tuple ($\tau,\ell,\Gamma,\{S_v\}_{v\in V(\tau)})$. Here we will use a slightly different notation to define and efficiently construct a tree $\tau$. For ease of exposition the tree will not be binary. We also associate to each node $v$ of $\tau$ a subset $\calR_v\subseteq\calR$. $\tau$ is defined in an iterative manner as follows. The root $\rho$ of $\tau$ is an Alice node and $\calR_\tau = \calR$. We attach to $\rho$ a child $v$ for each rectangle $R_v$ that can be possibly sent by Alice during this first step, i.e., each rectangle with degree at most $|\calR|/2$ in $G_H$. Each such child $v$ is an Alice node, we define $\calR_v$ removing from $\calR$ rectangles that are not $R_v$ or its neighbors in $G_H$. We also add another children $v$ of $\rho$ for the information that Alice cannot send any rectangle. In this case, $\calR_v$ is defined to be the subset of rectangles corresponding to nodes of $G_H$ of degree more than $|\calR|/2$. 

If $u$ is a generic Alice node with $|\calR_u|\geq 2$, we similarly attach to $u$ an edge to an Alice child $v$ for each rectangle $R_v$ that, in the subgraph $G_H[\calR_u]$ (i.e., induced by $\calR_u$), has low degree (at most $|\calR_u|/2$), and define $\calR_v$ being the inclusive neighborhood of $R$ in $G_H[\calR_u]$. We also attached another children $v$ for the information that Alice cannot send any rectangle, and define ${\cal R}_v$ as above.

Similarly, for a Bob node $u$ with $|\calR_u|\geq 2$, there is an Alice node $v$ for each rectangle $R_v$ possibly sent by Bob (there must be at least one by construction), and define $\calR_v$ to be the subset of $\calR_u$ of rectangles corresponding to nodes of high-degree in $G_H[\calR_v]$ (there must be at least one by construction).  A node $v$ is a leaf if $\calR_v=\{R\}$ for some $R\in\calR$, and we set $\Gamma(v)=R$.

$\tau$ as constructed above has size $t^{O(\log t)}$ and models the above protocol \emph{for any possible matrix $M$} whose partition in rectangles $\calR$ respects the structure of $G_H, G_V$. In other words, for each such matrix $M$ and for each input $r,c$, there is a root-to-leaf path in $\tau$ that corresponds to the execution of the protocol for $M$ on input $r,c$, and leads to the unique rectangle of $\calR$ containing $r,c$. On the other hand, depending on $M$ some of the nodes of $\tau$ might never be traversed during an execution: for instance, if in the first step of the protocol the low degree rectangles that Alice can send cover all the rows of $M$, then for every input row Alice will have a horizontally good rectangle to send, and the Bob child of the root will never be traversed. 

\smallskip

Consider $\tau$ as above and to each leaf $v$ with $\Gamma(v)=R$ we associate the extended formulation $T_R$ such that $P_R\subseteq \pi(T_R)\subseteq Q_R$, where $P_R,Q_R, \pi $ are defined as in Theorem~\ref{thm:algdetgeneral}. We proceed bottom up similarly to the proof of Theorem~\ref{thm:algdetgeneral} to obtain formulations $T_v$ for each $v$ node of $\tau$: if $v$ is an Alice node, $T_v$ is the intersection of $T_{v_i}$ for all the children $v_i$ of $v$, and if it is a Bob node, $T_v$ is obtained as their convex hull (using Theorem \ref{thm:balas}). We claim that $T_\rho$ is an extended formulation for $(P,Q)$.

To argue this, we will proceed by contradiction. First, we introduce for each node $v$ of $\tau$ a set $A_v$ of row indices of $M$ and $B_v$ of column indices of $M$ defined iteratively as follows. For the root $\rho$ of $\tau$, $A_\rho, B_\rho$ are the row set and the column set of $M$. For an Alice node $v$, with children $v_1,\dots, v_k$, we let $B_{v_i}=B_v$ for any $i$, and define $A_{v_1},\dots A_{v_k}$ to be a partition of $A_v$ defined as follows: $r \in A_v$ is in $A_{v_i}$ if, for a column $c$ of $B_v$, on input $r,c$ the execution of the protocol traverses node $v_i$. Notice that this partitions $A_v$ as the message sent by Alice at step $v$ can only depend on her input and on the knowledge that $c\in B_v$, hence for a given row, the choice of $c$ is irrelevant (i.e., requiring that the node is traversed for some $c\in B_v$ or for all of them is the same). Also, some of the $A_{v_i}$ can be empty. We proceed similarly for a Bob node $v$ with children $v_1,\dots, v_k$, letting $A_{v_i}=A_v$ for any $i$, and $B_{v_1},\dots,B_{v_k}$ be the partition of $B_v$ corresponding to the information that Bob sends (where some $B_{v_i}$ can be empty).

Now, assume by contradiction that $T_\rho$ is not an extended formulation for $(P,Q)$. First, assume that $P\not\subset \pi(T_\rho)$, in particular that there is vertex $x^*$ of $P$, corresponding to a column $c^*$ of $M$, with $x^*\not\in \pi(T_\rho)$. Notice that $c^*\in B_\rho$. We color red some vertices of $\tau$ according to the following rules: $\rho$ is red; for a red node $v$ and a child $u$ of $v$, if $x^*\not\in \pi(T_{u})$, color $u$ red. Notice that a red Alice node will have at least a red child, and all children of a red Bob node will be red. Now, color blue some vertices of $\tau$ according to the following rules: $\rho$ is blue; for a blue node $v$ and a child of $u$ with $c^*\in B_u$, color $u$ blue. Notice that a blue Bob node will have at least a blue child, and all children of a blue Alice node will be blue. Now, this implies that there is a path from $\rho$ to a leaf $v$ whose nodes are both red and blue. Let $R=\Gamma(v)$. Since $v$ is blue, we have that $c^*\in B_v$, hence $c^*\in R$, but since $v$ is red, $x^*\not\in \pi(T_v)=\pi(T_R)$, a contradiction to the fact that $T_R$ is an extended formulation of $(P_R,Q_R)$. Hence we proved $P\subseteq \pi(T_\rho)$.  Proceeding in an analogous way one proves that $\pi(T_\rho)\subseteq Q$, concluding the proof.\end{proof}

\bigskip

\noindent {\bf Acknowledgements.} We thank Mihalis Yannakakis for inspiring discussions and Samuel Fiorini for useful comments on \cite{aprile2018thesis}, where many of the results here presented appeared. Manuel Aprile would also like to thank Aur\'{e}lie Lagoutte and Nicolas Bousquet for useful discussions.

\printbibliography

\end{document}